\documentclass[11pt,onecolumn]{article}


\usepackage{geometry}
\geometry{a4paper, margin=1.8in}
\usepackage{mathtools}

\usepackage{amsmath,mathrsfs, textcomp, eufrak, mathtools}
\usepackage{amssymb,enumerate,amsthm}
\usepackage{pstricks,pst-plot,psfrag}
\usepackage[all]{xy}
\usepackage{graphicx,subfigure,xspace,bm}
\usepackage{etex}
\usepackage{todonotes}
\usepackage[normalem]{ulem}

\usepackage[lined,linesnumbered,algoruled,noend]{algorithm2e}

\newtheorem{theorem}{Theorem}
\newtheorem{definition}{Definition}
\newtheorem{assumption}{Assumption}
\newtheorem{lemma}{Lemma}

\newtheorem{remark}{Remark}

\newtheorem{corollary}{Corollary}
\newtheorem{proposition}{Proposition}  
%

\newcommand{\real}{{\mathbb{R}}}
\newcommand{\integers}{\mathbb{Z}}

\newcommand\redsout{\bgroup\markoverwith{\textcolor{red}{\rule[0ex]{2pt}{5pt}}}\ULon}

\newcommand{\ones}{\mathbf{1}}
\newcommand{\zeros}{\mathbf{0}}

\definecolor{darkgreen}{rgb}{0,0.5,0}

\newcommand{\oprocendsymbol}{\hbox{$\bullet$}}
\newcommand{\oprocend}{\relax\ifmmode\else\unskip\hfill\fi\oprocendsymbol}

\newcommand{\eps}{\epsilon}

\newcommand{\map}[3]{#1:#2 \rightarrow #3}

\newcommand\upscr[2]{#1^{\textup{#2}}}


\newcommand{\R}{\real}

\newcommand{\dout}{\upscr{d}{out}}

\newcommand{\Nin}{\upscr{N}{in}}
\newcommand{\Nout}{\upscr{N}{out}}
\renewcommand{\S}{\mathcal{S}}

\newcommand{\G}{\mathcal{G}}

\newcommand{\B}{\mathcal{B}}
\newcommand{\A}{\mathbb{A}}



\DeclareMathOperator*{\argmin}{arg\,min}

\renewcommand{\Pr}{\mathbb{P}}

\newcommand{\Ev}{\mathcal{A}}
\newcommand{\Ec}{\mathcal{C}}
\newcommand{\Es}{\mathcal{E}}
\newcommand{\Ex}{\mathbb{E}}
\newcommand{\V}{\mathcal{V}}
\newcommand{\Q}{\mathbb{Q}}

\usepackage{cancel}


\parskip = 0ex

\newcommand{\myclearpage}{\clearpage}
\renewcommand{\myclearpage}{}

\definecolor{BBlue}{cmyk}{.98,0.10,0,.25}

\begin{document}
\title{Push-sum on random graphs}
\author{Pouya Rezaienia, Bahman Gharesifard,\\ Tam{\'a}s Linder, and Behrouz Touri\thanks{The first three authors are with the Department of Mathematics and Statistics at Queen's University, Kingston, ON, Canada. The last author is with the Department of Electrical and Computer Engineering at the University of California, San Diego.}}
\maketitle
\begin{abstract}
In this paper, we study the problem of achieving average consensus over a random time-varying sequence of directed graphs by extending the class of so-called push-sum algorithms to such random scenarios. Provided that an ergodicity notion, which we term the directed infinite flow property, holds and the auxiliary states of agents are uniformly bounded away from zero infinitely often, we prove the almost sure convergence of the evolutions of this class of algorithms to the average of initial states. Moreover, for a random sequence of graphs generated using a time-varying $B$-irreducible probability matrix, we establish convergence rates for the proposed push-sum algorithm.
\end{abstract}

\section{Introduction}\label{sec:intro}
Many distributed algorithms, executed with limited information over a network of agents, rely on estimating the average value of the initial state of the individual agents. These include the distributed optimization protocols~\cite{JNT-DPB-MA:86, MR-RN:04,LX-SB:06, AN-AO:09,PW-MDL:09,AN-AO-PAP:10,BJ-MR-MJ:09,JW-NE:10,JW-NE:11, BG-JC:14-tac,AN-AO:15-tac}, distributed regret minimization algorithms in machine learning~\cite{MA-BG-TL:15-tcsn}, and dynamics for fusion of information in sensor networks~\cite{KIT-SL-MGR:12}. There is a large body of work devoted to the average consensus problem, starting with the pioneering work~\cite{DK-AD-JG:03}, where the so-called \emph{push-sum algorithm} is first introduced. The key differentiating factor of the push-sum algorithm from consensus dynamics is that it takes advantage of a paralleled scalar-valued agreement dynamics, initiated uniformly across the agents, that tracks the imbalances of the network and adjusts for them when estimating the consensus value.

In addition to the earlier work~\cite{DK-AD-JG:03}, several recent papers have studied the problem of average consensus, see for example~\cite{ADDG-CNH:13}, where other classes of algorithms based on weight adaptation are considered, ensuring convergence to the average on fixed directed graphs. The study of convergence properties of push-sum algorithms on  time-varying deterministic sequences of directed graphs, to best of our knowledge, was initiated in~\cite{FB-VB-PT-JT-MV:10} and extended in~\cite{AN-AO:15-tac}, where push-sum protocols are intricately utilized to prove the convergence of a class of distributed optimization protocols on a sequence of time-varying directed graphs. The key assumption in~\cite{AN-AO:15-tac} is the $ B $-connectedness of the sequence, which means that in any window of size $ B $ the union of the underlying directed graphs over time is strongly connected. As we demonstrate, a by product of our work in deterministic settings is the generalization of the sequences on which the convergence of the push-sum algorithms is valid to the ones which satisfy the infinite flow property; in this sense, this extension mimics the properties required for the convergence of consensus dynamics, along the lines of~\cite{BT:12-book}.

This paper is concerned with the problem of average consensus for scenarios where communication between nodes is time-varying and possibly random. The convergence properties of consensus dynamics on random sequences of directed graphs are by this time well-established, see for example~\cite{BT:12-book,touri2014endogenous,touri2014product}.  Average consensus on random graphs has also been studied in~\cite{FB-VB-PT-JT-MV:10}, under the assumption that the corresponding random sequence of stochastic  matrices is \emph{stationary} and ergodic with positive  diagonals and irreducible expectation. One of our main objectives in this work is to extend these result to more general sequences of random stochastic matrices, in particular, beyond stationary.  More importantly, to best of our knowledge, we establish for the first time convergence rates for the push-sum algorithms on random sequences of directed graphs.

The remainder of this paper is organized as follows. Section~\ref{sec:prelim} contains mathematical preliminaries. In Section~\ref{sec:statement}, we give a formal description of our consensus problem. In Section~\ref{sec:randompushsum}, we describe the push-sum algorithm. Section~\ref{sec:ergodicity} studies the ergodicity of row-stochastic matrices, and Section~\ref{sec:convergenceofpushsum} contains our main convergence results. In Section~\ref{sec:convergencerate}, we derive convergence rates for the push-sum algorithm for a class of random column-stochastic matrices. Finally, we gather our conclusions and ideas for future directions in Section~\ref{sec:conclusion}.

\section{Mathematical Preliminaries}\label{sec:prelim}
We start with introducing some notational conventions.  Let $ \real $ and $ \integers $ denote the set of real and integer numbers, respectively, and let $ \real_{\geq 0} $ and $ \integers_{\geq 0} $ denote the set of non-negative real numbers and integers, respectively. For a set $\A$, we write $S\subset \A$ if $S$ is a proper subset of $\A$, and we call the empty set and $\A$ trivial subsets of $\A$. The complement of $S$ is denoted by $\bar{S}$. Let $ |S| $ denote the cardinality of a finite set $ S $. We view all vectors in $\real^n$ as column vectors, where $n\in \integers_{\geq 0}$. We denote by $ \|\cdot\| $, $ \|\cdot\|_1 $ and $ \|\cdot\|_{\infty} $, the standard Euclidean norm, the $1$-norm, and the infinity norm on $ \real^n $, respectively. The $i$th unit vector in $ \real^n $, whose $ i $th component is $ 1 $ and all other components are $0$, is denoted by $ e_i $. We will also use the short-hand notation $ \ones_n=(1,\ldots,1)^T $ and $ \zeros_n=(0,\ldots,0)^T \in \real^n $. A vector $v$ is stochastic if its elements are nonnegative real numbers that sum to $1$. We use $ \real_{\geq 0}^{n\times n}$ to denote the set of $n\times n$ non-negative real-valued matrices. A matrix $A\in \real_{\geq 0}^{n\times n}$ is row-stochastic (column-stochastic) if each of its rows (columns) sums to 1.  For a given $A\in \real_{\geq 0}^{n\times n}$ and any nontrivial $S\subset [n]$, we let ${A_{S\bar{S}} = \sum_{i\in S, j\in \bar{S}}A_{ij}}$. The notation $A'$ and $v'$ will refer to the transpose of the matrix $A$ and the vector $v$, respectively. A positive matrix is a real matrix all of whose elements are positive. Finally, $A_i$ denotes the $i$th row of matrix $A$ and $A^j$ denotes the $j$th column of $A$.
\subsection{Graph theory}
A (weighted) \emph{directed graph} $\G=(\V,\Es,A) $ consists of a node set \sloppy ${\V = \{v_1,v_2,\ldots,v_n\}}$, an edge set $ \Es \subseteq \V\times \V$, and a weighted \emph{adjacency matrix} $ {A \in \real^{n\times n}_{\geq0}} $, with $ a_{ji}>0 $ if and only if $ (v_i,v_j)\in \Es $, in which case we say that $v_i$ is connected to $v_j$. Similarly, given a matrix $ A \in \real^{n\times n}_{\geq0} $, one can associate to $ A $ a directed graph $ \G=(\V,\Es) $, where $ (v_i,v_j)\in \Es $ if and only if $ a_{ji}>0 $, and hence $ A $  is the corresponding adjacency matrix for $ \G $. The in-neighbors and the out-neighbors of $v_i$ are the set of nodes ${\Nin_i= \{j\in [n]: a_{ij}>0\}}$ and ${\Nout_i= \{j\in [n]: a_{ji}>0\}}$, respectively. The out-degree of $v_i$ is ${\dout_i=|\Nout_i|}$. A path is a sequence of nodes connected by edges.  A directed graph is \emph{strongly connected} if there is a path between any pair of nodes. A directed graph is \emph{complete} if every pair of distinct vertices is connected by an edge. If the directed graph $\G=(\V,\Es,A) $ is strongly connected, we say that $A$ is irreducible.
\subsection{Sequences of random stochastic matrices}
Let $\S^{+}_{n}$ be the set of $n\times n$ column-stochastic matrices that have positive diagonal entries, and let $\mathcal{F}_{\S^{+}_{n}}$ denote the Borel $\sigma$-algebra on $\S^{+}_{n}$. Given a probability space $(\Omega, \B, \mu)$, a measurable function \sloppy $ {\map{W}{(\Omega, \B, \mu)}{(\S^{+}_n, \mathcal{F}_{\S^{+}_{n}})}} $ is called a random column-stochastic matrix, and a sequence $ \{W(t)\} $ of such measurable functions on $(\Omega, \B, \mu)$ is called a random column-stochastic matrix sequence; throughout, we assume that $t\in \integers_{\geq 0}$. Note that for any $ \omega \in \Omega $, one can associate a sequence of directed graphs $\{\G(t)(\omega)\}$ to $\{W(t)(\omega)\} $, where $(v_i,v_j)\in \Es(t)(\omega)$ if and only if $W_{ji}(t)(\omega)>0$. This in turn defines a sequence of random directed graphs on $ \V=\{v_1,\ldots, v_n\} $, which we denote by $\{\G(t)\}$. 
\myclearpage
\section{Problem Statement}\label{sec:statement}
Consider a network of nodes $ \V=\{v_1,v_2,\ldots, v_n\} $, where node $ v_i \in \V$ has an initial state (or opinion) $x_i(0)\in \real$; the assumption that this initial state is a scalar is without loss of generality, and our treatment can easily be extended to the vector case. The objective of each node is to achieve \emph{average consensus}; that is to compute the average $ {\bar{x} =\frac{1}{n}\sum_{i=1}^{n}x_i(0)} $ with the constraint that only limited exchange of information between nodes is permitted. The communication layer between nodes at each time $t\geq 0$ is specified by a sequence of random directed graphs $ \{\G(t)\} $, where $ \G(t) = (\V, \Es(t), W(t)) $. Specifically, at each time $ t $, node $ v_i $ updates its value based on the values of its in-neighbors $ v_j\in \Nin_i (t)$, where ${\Nin_i(t)=\{v_j\in \V:W_{ij}(t)>0\}}$.
One standing assumption throughout this paper is that each node knows its out-degree at every time $ t $; this assumption is indeed necessary, as shown in~\cite{JH-JT:15}.
Our main objective is to show that the class of so-called push-sum algorithms can be used to achieve average consensus at every node, under the assumption that the communication network is random. This key point distinguishes our work from the existing results in the literature~\cite{DK-AD-JG:03},~\cite{AN-AO:15-tac},~\cite{ADDG-CNH:13}. Another key objective that we pursue in this paper is to obtain rates of convergence for such algorithms. We start our treatment with reviewing the push-sum algorithm. 

\myclearpage

\section{Random Push-Sum}\label{sec:randompushsum}

Consider a network of nodes $ \V=\{v_1,v_2,\ldots, v_n\} $, where node $ v_i \in \V$ has an initial state (or opinion) $x_i(0)\in \real$. The push-sum algorithm, proposed originally in~\cite{DK-AD-JG:03}, is defined as follows. Each node $ v_i $ maintains and updates, at each time ${t\geq 0}$, two state variables $ x_i(t) $ and $ y_i(t) $. The first state variable is initialized to $ x_i(0) $ and the second one is initialized to $ y_i(0)=1 $, for all $i\in [n]$. At time $t\geq0$, node $v_i$ sends $\frac{x_i(t)}{\dout_i(t)}$ and $\frac{y_i(t)}{\dout_i(t)}$ to its out-neighbors in the random directed graph $\G(t) = (\V, \Es(t), W(t))$, which we assume to contain self-loops at each node for all $t\geq 0$. At time $ (t+1) $, node $ v_i $ updates its state variables according to 
\begin{align}\label{eqn:main-algo}
x_i(t+1) &=\sum_{j\in N_i^{in}(t)}\frac{x_j(t)}{\dout_j(t)},\cr
y_i(t+1) &= \sum_{j\in N_i^{in}(t)}\frac{y_j(t)}{\dout_j(t)}.
\end{align}
It is useful to define another auxiliary variable ${z_i(t+1) = \frac{x_i(t+1)}{y_i(t+1)}}$; as we will show later,  $z_i(t+1) $ is the estimate by node $ v_i $ of the average $ \bar{x} $. One can rewrite this algorithm in a vector form; let the column-stochastic matrix $W(t)$ to be a function of $\Es(t)$ with entries
\begin{equation}\label{eqn:W}
W_{ij}(t) = 
\begin{cases} \frac{1}{\dout_j(t)} &\text{if } j\in \Nin_i(t),\\
0&\text{otherwise.} 
\end{cases}
\end{equation}
Using these weighted adjacency matrices, for every $t\geq 0$, we can rewrite the dynamics~\eqref{eqn:main-algo} as
\begin{align}\label{eqn:mtxalgorithm}
x(t+1) &= W(t)x(t),\cr
y(t+1) &= W(t)y(t),
\end{align}
where 
\begin{align*}
x(t)&= (x_1(t), \ldots, x_n(t))', \cr
y(t)&= (y_1(t), \ldots, y_n(t))'.
\end{align*}

\myclearpage
\section{Ergodicity}\label{sec:ergodicity}
In this section, we establish some important auxiliary results regarding the convergence of products of matrices which satisfy the so-called directed infinite flow property (c.f. Definition~\ref{def:inf-flow}). We study the products of a class of matrices in a deterministic setting, which we then use to study the push-sum algorithm in the next section. We start by some definitions.
\begin{definition}[Ergodicity~\cite{SC-ES:77},~\cite{BT:12-book}]
Let $ \{A(t)\} $ be a sequence of row-stochastic matrices, and for $ t \geq s \geq 0$, let $ A(t:s)$ denote the product
\begin{align}\label{eqn:matrixprod}
A(t:s) = A(t)A(t-1)\cdots A(s),
\end{align}
where $ A(s:s) =A(s) $. The sequence $ \{A(t)\} $ is said to be \textit{weakly ergodic}, if for all $i,j,l \in [n]$ and any $s \geq 0$, $\lim_{t\rightarrow \infty} \left(A_{il}(t:s) - A_{jl}(t:s)\right) =0$. The sequence is said to be \textit{strongly ergodic} if $\lim_{t\rightarrow \infty}  A(t:s) = \ones_n v'(s)$ for any $ s\geq 0 $, where $ v(s) \in \R^n $ is a stochastic vector.  
\end{definition}
It can be shown that weak ergodicity and strong ergodicity are equivalent~\cite[Theorem~1]{SC-ES:77}. We will simply call such a sequence of row-stochastic matrices ergodic.

We first establish a sufficient condition for ergodicity of a sequence of row-stochastic matrices, Proposition~\ref{prop:shrink}, which we subsequently use in our convergence result for the push-sum algorithm. For this reason, we consider the following dynamical system:
\begin{align}\label{eqn:dynamicalsystem}
x(t+1) = A(t)x(t), \qquad \text{for all } t\geq 0.
\end{align}
Let us start by two key definitions. 
\begin{definition}[Strong Aperiodicity~\cite{BT:12-book}]\label{def:fdbk}
We say that a sequence of matrices $\{A(t)\}$ is \textit{strongly aperiodic} if there exists $\gamma>0$ such that ${A_{ii}(t)\geq \gamma}$, for all $t\geq0$ and $i\in [n]$.
\end{definition}

Motivated by the \textit{infinite flow property}~\cite[Definition 3.2.]{BT:12-book}, we provide the following definition.
\begin{definition}[Directed Infinite Flow Property]\label{def:inf-flow}
We say that a sequence of matrices $ \{A(t)\} $ has the \textit{directed infinite flow property} if for any non-trivial $ S\subset[n] $, $ {\sum_{t=0}^{\infty}A_{S\bar{S}}(t) = \infty} $.
\end{definition}
Consider now a sequence of matrices $\{A(t)\}$ that is strongly aperiodic and has the directed infinite flow property. Let ${k_0=0}$, and for any $q\geq1$, define
\begin{align}\label{def:kq}
k_q = \argmin_{t'> k_{q-1}} \left(\min_{S\subset [n]}\sum_{t=k_{q-1}}^{t'-1}A_{S\bar{S}}(t)> 0\right).
\end{align}
Note that $k_q$ is the minimal time instance after $k_{q-1}$, such that there is nonzero information flow between any non-trivial subset of $\V$ and its complement; consequently, the directed graph associated with the product ${A(k_{q}-1)A(k_{q}-2)\cdots A(k_{q-1})}$ is strongly connected.
\begin{proposition}
If a sequence of matrices $\{A(t)\}$ has the directed infinite flow property, $k_q$ is finite for all $ q \geq 0 $.
\end{proposition}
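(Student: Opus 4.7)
The plan is to prove this by induction on $q$. The base case is immediate since $k_0 = 0$ by definition. For the inductive step, I would assume $k_{q-1}$ is finite and show that the set over which we are minimizing in the definition of $k_q$ is nonempty, i.e., there exists some finite $t' > k_{q-1}$ such that $\min_{S \subset [n]} \sum_{t=k_{q-1}}^{t'-1} A_{S\bar{S}}(t) > 0$.

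The key observation is that the directed infinite flow property gives a divergent series $\sum_{t=0}^\infty A_{S\bar{S}}(t) = \infty$ for each non-trivial $S$. Removing the finite initial segment $t = 0, \ldots, k_{q-1} - 1$ leaves $\sum_{t=k_{q-1}}^\infty A_{S\bar{S}}(t) = \infty$, so in particular there exists a finite time $t_S > k_{q-1}$ with $\sum_{t=k_{q-1}}^{t_S - 1} A_{S\bar{S}}(t) > 0$. The minimum over $t'$ in the definition~\eqref{def:kq} is then bounded above by $t_S$ for that particular $S$.

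To handle all non-trivial subsets simultaneously, I would use the fact that $[n]$ has only finitely many non-trivial subsets (precisely $2^n - 2$ of them). Setting $t^\star = \max_{S} t_S$, which is a maximum of finitely many finite numbers and hence finite, one has for every non-trivial $S$ that $\sum_{t=k_{q-1}}^{t^\star - 1} A_{S\bar{S}}(t) \geq \sum_{t=k_{q-1}}^{t_S - 1} A_{S\bar{S}}(t) > 0$ since the summands are non-negative and $t^\star \geq t_S$. Hence $\min_{S} \sum_{t=k_{q-1}}^{t^\star-1} A_{S\bar{S}}(t) > 0$, the argmin in~\eqref{def:kq} is taken over a nonempty set of integers bounded below by $k_{q-1}+1$, and so $k_q \leq t^\star < \infty$.

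There is no serious obstacle here; the argument is really a bookkeeping check on the definition, and the only point worth flagging is the finiteness of the power set of $[n]$, which is what lets us pass from a per-subset bound to a uniform bound over all non-trivial $S$.
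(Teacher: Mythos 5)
Your proof is correct and is essentially the paper's argument in contrapositive form: the paper supposes $k_q$ infinite and extracts a non-trivial $S$ with $\sum_{t=k_{q-1}}^{\infty}A_{S\bar{S}}(t)=0$, contradicting the directed infinite flow property, whereas you argue directly by exhibiting a finite $t^\star$ witnessing the minimum. Your version is slightly more careful in making explicit the induction on $q$ (finiteness of $k_{q-1}$) and the passage from per-subset to uniform bounds via the finiteness of the collection of non-trivial subsets, both of which the paper uses implicitly.
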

\begin{proof}
Suppose that $k_q$ is not finite for some ${q\geq 0}$. Then, using~\eqref{def:kq}, there exists a non-trivial subset $S\subset [n]$ such that ${\sum_{t=k_{q-1}}^{\infty}A_{S\bar{S}}(t)= 0}$. This implies that $\sum_{t=0}^{\infty}A_{S\bar{S}}(t)< \infty$, which contradicts the assumption that $\{A(t)\}$ has the directed infinite flow property.
\end{proof}
To establish convergence results for the products of row-stochastic matrices satisfying Proposition~\ref{def:inf-flow}, we argue that in each time window where the underlying directed graph becomes strongly connected for $n$ times, i.e., after $k_{qn} - k_{(q-1)n}$ time steps for some $q$, \emph{significant mixing} will occur. To formalize this statement, let $\ell_0 =0$ and
\begin{align}\label{def:lq}
\ell_q = k_{qn}-k_{(q-1)n},
\end{align}
for $q\geq 1$. For $t>s\geq 0$, we also define
\[\Q_{t,s}= \{q: s\leq k_{(q-1)n},k_{qn}\leq t\}.\]
We are now ready to state our first result.
\begin{proposition}\label{prop:shrink}
Consider the dynamics~\eqref{eqn:dynamicalsystem}, where the sequence of row-stochastic matrices $\{A(t)\}$ is such that $A'(t)$ satisfies~\eqref{eqn:W}. Suppose, additionally, that  $\{A(t)\}$
is strongly aperiodic and has the directed infinite flow property.
Then,
\begin{enumerate}[(i)]
\item
there is a vector $\phi(s) \in \real^{n}$ such that, for all $i,j \in [n]$ and $t\geq s$,
\begin{align*}
\biggl|[A(t:s)]_{ij} - \phi_j(s)\biggl| \leq \Lambda_{t,s},
\end{align*}
where $\Lambda_{t,s}=\prod_{q\in \Q_{t,s}} \lambda_q$ and $\lambda_q = \left(1-\frac{1}{n^{\ell_q}}\right) \in (0,1)$;
\item
if, for the sequence $\{\ell_q\}$ associated with $\{A(t)\}$, we have
\begin{align}\label{eqn:suminfty}
\sum_{q=1}^{\infty}\frac{1}{n^{\ell_q}}= \infty,
\end{align}
then the sequence $\{A(t)\}$ is ergodic.
\end{enumerate}
\end{proposition}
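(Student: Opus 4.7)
The proof has two halves: a block-wise contraction of column-wise spreads for $A(t:s)$, and a routine infinite-product estimate showing $\Lambda_{t,s}\to 0$ under~\eqref{eqn:suminfty}. The structural input I will exploit is that every nonzero entry of $A(t)$ equals $1/\dout_i(t)\geq 1/n$, and that strong aperiodicity amounts to self-loops always being present, i.e., $A_{ii}(t)\geq 1/n$ for all $i,t$.

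Fix $s\geq 0$ and a column index $j\in[n]$. Writing $y(t):=A(t:s)e_j$, the update $y(t+1)=A(t+1)y(t)$ expresses each coordinate of $y(t+1)$ as a convex combination of coordinates of $y(t)$, so $M_j(t):=\max_i[A(t:s)]_{ij}$ is non-increasing and $m_j(t):=\min_i[A(t:s)]_{ij}$ is non-decreasing in $t$. Both limits exist; I set $\phi_j(s):=\lim_{t\to\infty}M_j(t)$. If I can show that the spread $V_j(t):=M_j(t)-m_j(t)$ is bounded by $\Lambda_{t,s}$ for each $j$, then $m_j(t)\leq\phi_j(s)\leq M_j(t)$ yields $|[A(t:s)]_{ij}-\phi_j(s)|\leq\Lambda_{t,s}$, proving part (i).

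The workhorse is a Dobrushin-type contraction across one super-block $[k_{(q-1)n},k_{qn})$. Let $B_q:=A(k_{qn}-1:k_{(q-1)n})$, and for $r=1,\dots,n$ let $P_r:=A(k_{(q-1)n+r}-1:k_{(q-1)n+r-1})$, so that $B_q=P_n P_{n-1}\cdots P_1$. I claim $B_q(i,j)\geq n^{-\ell_q}$ for all $i,j$. By minimality of each $k_{q'}$ the aggregated graph of round $r$ is strongly connected (otherwise a sink SCC would witness $A_{S\bar S}(\cdot)\equiv 0$ on the round); padding the unique edge-of-appearance of any round-$r$ aggregated edge by self-loops before and after shows the graph of $P_r$ contains that aggregated graph, so the graph of $P_r$ is itself strongly connected and has self-loops at every node. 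For fixed $j$, the reachable sets $R_r:=\{v:[P_r\cdots P_1]_{vj}>0\}$ then satisfy $R_0=\{j\}$, $R_r\subseteq R_{r+1}$ by self-loops, and $|R_{r+1}|\geq|R_r|+1$ whenever $R_r\neq[n]$, because strong connectivity of $P_{r+1}$'s graph forces an edge from $R_r$ into its complement. Hence $R_n=[n]$, and since every contributing walk has exactly $\ell_q$ single-step transitions each of weight $\geq 1/n$, one gets $B_q(i,j)\geq(1/n)^{\ell_q}$. A standard ``common mass plus remainder'' decomposition of pairs of rows then yields $V_j(k_{qn})\leq(1-n^{1-\ell_q})V_j(k_{(q-1)n})\leq \lambda_q V_j(k_{(q-1)n})$, and iterating over blocks in $\Q_{t,s}$ starting from $V_j(s)\leq 1$ produces $V_j(t)\leq \Lambda_{t,s}$, finishing (i). The main obstacle is this reachability-through-products argument, particularly the verification that strong connectivity of the per-round aggregated graphs transfers to the graph of the round-product $P_r$ via self-loop padding, and that $n$ rounds suffice to exhaust all nodes starting from any singleton $\{j\}$.

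For part (ii), $\log(1-x)\leq -x$ on $(0,1)$ gives $\log\Lambda_{t,s}\leq -\sum_{q\in\Q_{t,s}}n^{-\ell_q}$, and as $t\to\infty$ the index set $\Q_{t,s}$ exhausts all sufficiently large $q$, so hypothesis~\eqref{eqn:suminfty} forces $\Lambda_{t,s}\to 0$. Combined with (i), this gives $A(t:s)\to\ones_n\phi(s)'$, which is strong (hence weak) ergodicity.
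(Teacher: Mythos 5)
Your proposal is correct and follows essentially the same route as the paper's proof: decompose $A(t:s)$ into the super-blocks $A(k_{qn}-1:k_{(q-1)n})$, show each such block is a positive matrix with entries bounded below by $n^{-\ell_q}$, and deduce the geometric contraction of the column-wise max--min spread, with part (ii) following from $\ln(1-x)\le -x$. The only differences are presentational: you prove inline what the paper delegates to Lemma~\ref{lemma:compgraph}, Lemma~\ref{lemma:Asu}, and the cited Hajnal contraction lemma, and your definition of $\phi_j(s)$ as the monotone limit of $\max_i[A(t:s)]_{ij}$ is a slightly cleaner choice than the paper's $\min_i[A(t:s)]_{ij}$ (which as written still depends on $t$).
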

\begin{proof}
We start by proving the first statement.
By definition of $k_q$, we know that for all $q\geq 0$, ${A(k_{q+1}-1:k_q)}$ is irreducible. Since each $A(t)$ is strongly aperiodic, by Lemma~\ref{lemma:compgraph}, the matrix
\begin{multline*}
A(k_{n(q+1)} -1:k_{nq})\\
= A(k_{n(q+1)}-1:k_{n(q+1)-1})\times\cdots\times A(k_{nq+2}-1:k_{nq+1})\\
\times A(k_{nq+1}-1:k_{nq}),
\end{multline*}
which is the product of $n$ irreducible matrices, is positive for all $q\geq 0$.
Hence, by Lemma~\ref{lemma:Asu}~(ii), for all $i,j\in [n]$, we have
\begin{align*}
[A(k_{n(q+1)} -1:k_{nq})]_{ij}\geq \frac{1}{n^{k_{n(q+1)} -k_{nq}} }= \frac{1}{n^{l_{q+1}}}.
\end{align*}
Now, since $A(t:s) = A(t:s) I_n$ and for all $j\in [n]$, \sloppy ${\max_{i\in[n]}[I_n]_{ij}-\min_{i\in[n]}[I_n]_{ij}= 1 }$, using~\cite[Lemma~3]{JH-MSB:58}, we obtain
\begin{align}\label{eqn:m-m}
\max_{i\in [n]} [A(t:s)]_{ij} - \min_{i\in [n]} [A(t:s)]_{ij} \leq \Lambda_{t,s}.
\end{align}
Note that if we let $ \phi_j(s) = \min_{i\in [n]}A_{ij}(t:s) $ for all $ j \in [n] $, we have 
\begin{align}\label{eqn:A-phi-ineq}
\biggl| [A(t:s)]_{ij} - \phi_j(s) \biggl| \leq \max_{i\in [n]} [A(t:s)]_{ij} - \min_{i\in [n]} [A(t:s)]_{ij}.
\end{align}
Using~\eqref{eqn:m-m} and \eqref{eqn:A-phi-ineq}, we conclude that
\begin{align*}
\biggl|[A(t:s)]_{ij} - \phi_j(s)\biggl| \leq \Lambda_{t,s},
\end{align*}
for all $ i,j\in [n] $.

We next prove part (ii); since $\lambda_q  \in \left(0,1\right)$ for all $q\geq 1$, we have that $\ln\left(\lambda_q\right) \leq\frac{-1}{n^{\ell_q}}$, where we have used the fact that $\ln(\zeta)\leq\zeta-1$ for all $\zeta>0$. This implies
\begin{align}
\sum_{q=1}^{\infty}\ln\left(\lambda_q\right)&\leq-\sum_{q=1}^{\infty}\frac{1}{n^{\ell_q}}.\label{eqn:aux-a}
\end{align}
On the other hand, we have 
\begin{align*}
\lim_{t\rightarrow\infty}\Lambda_{t,0}=\lim_{t\rightarrow\infty}\prod_{q\in \Q_{t,0}}\lambda_q= \lim_{t\rightarrow\infty}\exp\left(\sum_{q\in \Q_{t,0}}\ln\left(\lambda_q\right) \right).
\end{align*}
The definition of the sets $\Q_{t,s}$ implies that we can write the right hand side as $\exp\left({\sum_{q=1}^{\infty}\ln\left(\lambda_q\right)}\right)$, which gives
\begin{align*}
\lim_{t\rightarrow\infty}\Lambda_{t,0} = \exp\left(\sum_{q=1}^{\infty}\ln\left(\lambda_q\right)\right) = 0,
\end{align*}
where the last equality follows from~\eqref{eqn:aux-a} and the assumption $\sum_{q=0}^{\infty}\frac{1}{n^{\ell_q}} = \infty$. Using the fact that $	\lim_{t\rightarrow\infty}\Lambda_{t,0} = 0$, we have that $ {\lim_{t\rightarrow\infty}\Lambda_{t,s} = 0}$, for any $s>0$. Hence, by Proposition~\ref{prop:shrink}, part (i), we conclude that $\{A(t)\}$ is weakly (and thus strongly) ergodic.
\end{proof}

Following similar steps as in Proposition~\ref{prop:shrink} we obtain the following result for sequences of column-stochastic matrices of the form~\eqref{eqn:W}.
\begin{proposition}\label{prop:sh2}
Consider the dynamics~\eqref{eqn:dynamicalsystem} and assume that sequence of matrices $\{A(t)\}$ is strongly aperiodic and has the directed infinite flow property, where the $A(t)$ are weighted adjacency matrices in the form of~\eqref{eqn:W}.				
Then,
\begin{enumerate}[(i)]
\item
there is a vector $\phi(t) \in \real^{n}$ such that, for all $i,j \in [n]$ and $t\geq s$,
\begin{align*}
\biggl|[A(t:s)]_{ij} - \phi_i(t)\biggl| \leq \Lambda_{t,s},
\end{align*}
where $\Lambda_{t,s} = \prod_{q\in \Q_{t,s}}\lambda_q$ and $\lambda_q = \left(1-\frac{1}{n^{\ell_q}}\right)$;
\item
for the sequence $\{\ell_q\}$ associated with $\{A(t)\}$, if
\begin{align*}
\sum_{q=1}^{\infty}\frac{1}{n^{\ell_q}}= \infty,
\end{align*}
then for all $j\in [n]$, $\lim_{t\rightarrow \infty}\biggl|[A(t:s)]_{ij} - \phi_i(t)\biggl| =0$.
\end{enumerate}
\end{proposition}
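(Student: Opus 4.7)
The plan is to mirror the proof of Proposition~\ref{prop:shrink} step by step, but now to track the \emph{rowwise} spread $\max_j[A(t:s)]_{ij}-\min_j[A(t:s)]_{ij}$ of each row of the product, rather than the columnwise spread. This is the right quantity because we want each row of $A(t:s)$ to approach a constant multiple of $\ones'$, a shape that is preserved under right-multiplication by a column-stochastic matrix since $\ones'A(s)=\ones'$. It also explains why the limit object $\phi(t)$ depends on the terminal time $t$ rather than the initial time $s$.

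First, I would check that the cut-time machinery transfers unchanged: strong aperiodicity concerns only diagonal entries, and the directed infinite flow property is invariant under $S\mapsto\bar S$. Hence $\{k_q\}$ and $\{\ell_q\}$ defined by~\eqref{def:kq} and~\eqref{def:lq} behave exactly as in Proposition~\ref{prop:shrink}, and the $n$-fold composition $A(k_{n(q+1)}-1:k_{nq})$ is entrywise positive with every entry at least $1/n^{\ell_{q+1}}$ by Lemmas~\ref{lemma:compgraph} and~\ref{lemma:Asu}~(ii); these lemmas do not distinguish the row- from the column-stochastic case.

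Second, I would invoke the columnwise dual of \cite[Lemma~3]{JH-MSB:58}: for any matrix $B$ and any column-stochastic $P$ whose entries are all at least $\alpha>0$,
\begin{align*}
\max_j[BP]_{ij}-\min_j[BP]_{ij}\leq(1-\alpha)\bigl(\max_k B_{ik}-\min_k B_{ik}\bigr),\quad\text{for each }i.
\end{align*}
This follows by writing $P=\alpha\, e_{k_0}\ones'+(1-\alpha)Q$ with $Q$ column-stochastic: the rank-one piece $e_{k_0}\ones'$ is carried by $B$ to a matrix with $j$-independent rows and contributes nothing to the rowwise spread, while $[BQ]_{ij}$ is a convex combination of the entries of the $i$th row of $B$. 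Starting from $A(t:s)=I_n\cdot A(t:s)$, whose identity-matrix rows have spread $1$, and telescoping one contraction per block $q\in\Q_{t,s}$ with $\alpha=1/n^{\ell_q}$, yields $\max_j[A(t:s)]_{ij}-\min_j[A(t:s)]_{ij}\leq\Lambda_{t,s}$. Setting $\phi_i(t):=\min_j[A(t:0)]_{ij}$ and noting by a short monotonicity argument (right-multiplication by a column-stochastic matrix sends row entries into convex combinations of themselves, so the $j$-min is nondecreasing and the $j$-max is nonincreasing as $s$ decreases) that $\phi_i(t)\in[\min_j[A(t:s)]_{ij},\max_j[A(t:s)]_{ij}]$ for every $s\leq t$, gives~(i).

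For~(ii), the argument is identical to the second half of Proposition~\ref{prop:shrink}: the Taylor bound $\ln(1-1/n^{\ell_q})\leq-1/n^{\ell_q}$ combined with $\sum_q 1/n^{\ell_q}=\infty$ forces $\Lambda_{t,s}\to 0$, and the claim follows from (i). The principal obstacle will be isolating and verifying the correct one-sided contraction for right-multiplication by a positive column-stochastic matrix, with the identity $\ones'P=\ones'$ playing the role that $A\ones=\ones$ plays in Proposition~\ref{prop:shrink}; once this dual lemma is in hand, the rest is a mechanical dualization.
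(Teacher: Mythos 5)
Your proposal is correct and carries out exactly the dualization that the paper leaves implicit (the paper gives no separate proof, asserting only that Proposition~\ref{prop:sh2} follows by steps similar to Proposition~\ref{prop:shrink}): the same cut-time blocks, positivity via Lemmas~\ref{lemma:compgraph} and~\ref{lemma:Asu}, and a one-sided spread contraction, now applied rowwise under right-multiplication by column-stochastic factors. Your monotonicity argument showing that $\phi_i(t)=\min_j[A(t:0)]_{ij}$ lies in $[\min_j[A(t:s)]_{ij},\max_j[A(t:s)]_{ij}]$ for every $s\leq t$ is a welcome extra detail that the paper glosses over even in the proof of Proposition~\ref{prop:shrink}.
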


It is worth pointing out that in Proposition~\ref{prop:shrink}, since the $A(t)$ are row-stochastic, $x(t)$ approaches a vector with identical entries. However, in Proposition~\ref{prop:sh2} the $x(t)$ does not necessarily approach a fixed vector.

\section{Convergence of Push-Sum}\label{sec:convergenceofpushsum}
With all the pieces in place, we are now ready to study the behavior of the push-sum algorithm in a random setting. 
\begin{theorem}\label{push}
Consider the push-sum algorithm~\eqref{eqn:mtxalgorithm} and suppose that the sequence of random column-stochastic matrices $\{W(t)\}$ has the directed infinite flow property, almost surely. Then, we have
\begin{align*}
\left| z_i(t+1)- \bar{x}\right|\leq \frac{2\|x(0)\|_1}{{y_{i}(t+1)}}\Lambda_{t,0},
\end{align*}
where $\Lambda_{t,0} = \prod_{q\in \Q_{t,0}}\lambda_q$ and $\lambda_q= \left(1-\frac{1}{n^{\ell_q}}\right)\in (0,1)$.
\end{theorem}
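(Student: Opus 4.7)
The plan is to apply Proposition~\ref{prop:sh2} pathwise and then exploit the structure of both $x(t+1)$ and $y(t+1)$ to get cancellation of the dominant term.

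First I would fix a realization $\omega$ on which $\{W(t)\}$ has the directed infinite flow property. The sequence $\{W(t)\}$ already has the form~\eqref{eqn:W} by construction, and because every node carries a self-loop (a standing assumption in Section~\ref{sec:randompushsum}) we have $W_{ii}(t) \geq 1/\dout_i(t) \geq 1/n$, so $\{W(t)\}$ is strongly aperiodic. Thus the hypotheses of Proposition~\ref{prop:sh2}(i) are met, and there exists $\phi(t)\in\real^n$ with $\bigl|[W(t{:}0)]_{ij} - \phi_i(t)\bigr| \leq \Lambda_{t,0}$ for all $i,j \in [n]$. Note that here I only need part~(i); the ergodicity conclusion in~(ii) is not required.

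Next I would iterate~\eqref{eqn:mtxalgorithm} to write $x(t+1) = W(t{:}0)\,x(0)$ and $y(t+1) = W(t{:}0)\,\mathbf{1}_n$ (using $y(0)=\mathbf{1}_n$), and then set $\epsilon_{ij}(t) := [W(t{:}0)]_{ij} - \phi_i(t)$ with $|\epsilon_{ij}(t)| \leq \Lambda_{t,0}$. Substituting gives
\begin{align*}
x_i(t+1) &= n\,\phi_i(t)\,\bar{x} + \sum_{j=1}^n \epsilon_{ij}(t)\,x_j(0),\\
y_i(t+1) &= n\,\phi_i(t) + \sum_{j=1}^n \epsilon_{ij}(t),
\end{align*}
where I used $\sum_j x_j(0) = n\bar{x}$. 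The key (and really the only nontrivial) step is the ensuing cancellation: forming $z_i(t+1) - \bar{x} = \bigl(x_i(t+1) - \bar{x}\,y_i(t+1)\bigr)/y_i(t+1)$, the $n\phi_i(t)\bar{x}$ terms cancel exactly, leaving
\[
z_i(t+1) - \bar{x} = \frac{1}{y_i(t+1)} \sum_{j=1}^n \epsilon_{ij}(t)\bigl(x_j(0) - \bar{x}\bigr).
\]

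Finally I would apply the triangle inequality together with $|\epsilon_{ij}(t)| \leq \Lambda_{t,0}$, and use the elementary bound $\sum_j |x_j(0) - \bar{x}| \leq \|x(0)\|_1 + n|\bar{x}| \leq 2\|x(0)\|_1$ (since $n|\bar{x}| = |\sum_j x_j(0)| \leq \|x(0)\|_1$). This yields exactly
\[
|z_i(t+1) - \bar{x}| \leq \frac{2\|x(0)\|_1}{y_i(t+1)}\,\Lambda_{t,0}.
\]
Since the argument is pathwise and valid on the almost sure event where the directed infinite flow property holds, the conclusion follows almost surely. The main obstacle is really just recognizing the cancellation in the middle step; everything else is bookkeeping once Proposition~\ref{prop:sh2} is in hand.
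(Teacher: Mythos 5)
Your proposal is correct and follows essentially the same route as the paper: both invoke part~(i) of Proposition~\ref{prop:sh2} to write $W(t{:}0) = \phi(t)\ones_n' + D(t{:}0)$ (your $\epsilon_{ij}(t)$ is exactly $[D(t{:}0)]_{ij}$), exploit the cancellation of the $n\phi_i(t)\bar{x}$ term in $x_i(t+1)-\bar{x}\,y_i(t+1)$, and finish with the triangle inequality and $|\ones_n'x(0)|\leq\|x(0)\|_1$. Your componentwise presentation makes the cancellation slightly more transparent than the paper's common-denominator computation, but the argument is the same.
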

\begin{proof}
Define
\begin{align*}
D(t:s) \triangleq W(t:s) - \phi(t)\ones_n',
\end{align*}
where $\phi(t)$ is a (random) vector from part (i) of Proposition~\ref{prop:sh2}. In addition, under the push-sum algorithm we have that
\begin{align*}
x(t+1)=W(t:0)x(0),\cr
y(t+1)=W(t:0)y(0),
\end{align*}
for all $t\geq0$. Hence, for every $t\geq 0$ and all $i\in [n]$, we have
\begin{align*}
z_i(t+1) - \bar{x}&= \frac{x_i(t+1)}{y_i(t+1)} - \frac{\ones_n' x(0)}{n}\cr
&=\frac{[W(t:0)x(0)]_i}{[W(t:0)y(0)]_i} - \frac{\ones_n' x(0)}{n}\cr
&=\frac{[D(t:0)x(0)]_i + \phi_i(t)\ones_n' x(0)}{[D(t:0)y(0)]_i + \phi_i(t)\ones_n' y(0)} - \frac{\ones_n' x(0)}{n}.
\end{align*}
Using the fact that $y(0) = \ones_n$ and by bringing the fractions to a common denominator, we have
\begin{align*}
z_i(t+1) - \bar{x}=&\frac{[D(t:0)x(0)]_i + \phi_i(t)\ones_n' x(0)}{[D(t:0)\ones_n]_i + n\phi_i(t)} - \frac{\ones_n' x(0)}{n}\cr
=&\frac{n[D(t:0)x(0)]_i + n\phi_i(t)\ones_n' x(0)}{n([D(t:0)\ones_n]_i + n\phi_i(t))}\cr
&- \frac{[D(t:0)\ones_n]_i\ones_n' x(0)+n\phi_i(t)\ones_n' x(0)}{n([D(t:0)\ones_n]_i + n\phi_i(t))}\cr
=&\frac{n[D(t:0)x(0)]_i + [D(t:0)\ones_n]_i\ones_n' x(0)}{n([D(t:0)\ones_n]_i + n\phi_i(t))}.
\end{align*}
Note that the denominator in the last equation is equal to $ny_i(t+1)$. Hence, for all $i\in[n]$ and $t\geq 1$ we have
\begin{align*}
\left|z_i(t+1) - \bar{x}\right|\leq&\frac{\|x(0)\|_1}{{y_{i}(t+1)}}\left(\max_{j}|[D(t:0)]_{ij}|\right)\cr
&+\frac{|\ones_n' x(0)|}{n{y_{i}(t+1)}}\left(\max_{j}|[D(t:0)]_{ij}|\right)n\cr
=&\frac{|\ones_n' x(0)|+\|x(0)\|_1}{{y_{i}(t+1)}}   \left(\max_{j}\left|[D(t:0)]_{ij}\right|  \right),
\end{align*}
where the inequality follows from the triangle inequality. Since $ |\ones_n'x(0)|\leq \|x(0)\|_1$, we have that
\begin{align*}
\left|z_i(t+1) - \bar{x}\right|\leq \frac{2\|x(0)\|_1}{{y_{i}(t+1)}}\left(\max_{j}\left|[D(t:0)]_{ij}\right|\right).
\end{align*}
Using the upper bound in part (i) of Proposition~\ref{prop:sh2},  we obtain 
\begin{align}\label{eqn:T1}
\left|z_i(t+1) - \bar{x}\right|\leq \frac{2\|x(0)\|_1}{{y_{i}(t+1)}} \Lambda_{t,0}.
\end{align}
\end{proof}
\begin{proposition}\label{prop:pushconv}
Consider the push-sum algorithm~\eqref{eqn:mtxalgorithm} and suppose that the sequence of random column-stochastic matrices $\{W(t)\}$ has the directed infinite flow property, almost surely. Moreover, suppose that the sequence $\{\ell_q\}$ associated with $\{W(t)\}$ satisfies~\eqref{eqn:suminfty}, almost surely. If there exists $\delta>0$, such that for any $t\geq0 $, there is $t'\geq t$ such that $y_i(t')\geq \delta$ for all $i\in [n]$, then  
\begin{align*}
\lim_{t\rightarrow\infty}&\left|z_i(t+1) - \bar{x}\right| = 0,\quad\text{almost surely}.
\end{align*}
\end{proposition}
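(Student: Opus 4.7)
The plan is to combine the quantitative bound from Theorem~\ref{push} with two further inputs: the almost-sure decay $\Lambda_{t,0}\to 0$ forced by condition~\eqref{eqn:suminfty}, and the recurrent lower bound on $y_i$ furnished by the $\delta$-hypothesis. Everything is carried out pathwise on the full-measure event on which the three hypotheses hold simultaneously. By Theorem~\ref{push}, the task reduces to showing that
\[
\frac{2\|x(0)\|_1\,\Lambda_{t,0}}{y_i(t+1)}
\]
can be driven to zero along an appropriate sequence of times.

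For the numerator, I would reuse the short calculation already carried out in the proof of Proposition~\ref{prop:shrink}(ii): the inequality $\ln(1-u)\leq -u$ combined with $\sum_{q\geq 1} n^{-\ell_q}=\infty$ yields $\sum_{q}\ln\lambda_q=-\infty$, so $\Lambda_{t,0}=\exp\bigl(\sum_{q\in\Q_{t,0}}\ln\lambda_q\bigr)\to 0$ almost surely. For the denominator, fix $\epsilon>0$ and pick a random $T$ so large that $\Lambda_{t,0}<\epsilon\delta/(2\|x(0)\|_1)$ for every $t\geq T$. The recurrence hypothesis then supplies some $t'\geq T+1$ at which $y_i(t')\geq\delta$ for every $i\in[n]$, and at this instant the Theorem~\ref{push} bound gives $|z_i(t')-\bar{x}|<\epsilon$. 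Pushing $T$ to infinity produces an unbounded increasing sequence of times along which $z_i$ converges to $\bar{x}$, which is the conclusion in the form naturally dictated by the $\delta$-recurrence assumption.

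The main obstacle is precisely the coupling between $\Lambda_{t,0}$ and $y_i(t+1)$ in the estimate: at times outside the recurrence set, $1/y_i(t+1)$ may blow up and render the Theorem~\ref{push} bound vacuous, since the hypothesis controls the $y_i$'s only along a subsequence. The argument therefore delivers convergence along the $\delta$-recurrence subsequence cleanly, and extending it to a genuine full-sequence limit would require an additional propagation bound — for instance, an estimate showing that $y_i$ cannot shrink by more than a controlled factor between consecutive recurrence times, exploiting self-loops (which guarantee $y_i(t+1)\geq y_i(t)/n$) to transfer the $\delta$-bound from a recurrence time to all sufficiently nearby times. Making this transfer quantitative, so that the loss is dominated by the exponential decay of $\Lambda_{t,0}$, is the most delicate step in the plan.
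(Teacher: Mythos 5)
There is a genuine gap, and you have correctly located it yourself: your argument only proves that $|z_i(t')-\bar{x}|\to 0$ along the subsequence of $\delta$-recurrence times, whereas the proposition asserts the full limit $\lim_{t\to\infty}|z_i(t+1)-\bar{x}|=0$. The patch you sketch --- propagating the lower bound on $y_i$ from one recurrence time to nearby times via the self-loop estimate $y_i(t+1)\geq y_i(t)/n$ --- cannot be completed under the stated hypotheses. The recurrence assumption only guarantees that for every $t$ there is \emph{some} $t'\geq t$ with $y_i(t')\geq\delta$; it places no bound whatsoever on the gaps between successive recurrence times, so the geometric loss $n^{-(t-t')}$ accumulated between them is uncontrolled. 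Moreover, under the bare summability condition~\eqref{eqn:suminfty} the decay of $\Lambda_{t,0}$ can be arbitrarily slow (the $\ell_q$ may grow), so there is no hope of dominating that loss by the decay of the numerator. The ratio $\Lambda_{t,0}/y_i(t+1)$ in the Theorem~\ref{push} bound genuinely need not tend to zero along the whole sequence.

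The paper closes this gap with a different device: Lemma~\ref{lemma:WG}, which introduces the quantity
\begin{align*}
f(t) = \max_{i\in [n]}\frac{\sum_{j=1}^{n}\bigl|[W(t:0)]_{ij}-\tfrac{1}{n}\sum_{k=1}^{n}[W(t:0)]_{ik}\bigr|}{y_i(t)}
\end{align*}
and asserts that $f$ is \emph{non-increasing} in $t$ and that $\|z(t)-\bar{x}\ones_n\|_{\infty}\leq\|x(0)\|_{\infty}f(t)$. This monotonicity is exactly the missing propagation mechanism: by Proposition~\ref{prop:sh2}(ii) the numerator is eventually smaller than $\delta\varepsilon$, the recurrence hypothesis supplies one time $t_\varepsilon'$ at which the denominator is at least $\delta$, hence $f(t_\varepsilon')<\varepsilon$, and monotonicity then forces $f(t)<\varepsilon$ for \emph{all} $t\geq t_\varepsilon'$ --- no control of $y_i$ at intermediate times is needed. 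Your reduction of the numerator ($\Lambda_{t,0}\to 0$ via $\ln(1-u)\leq -u$ and~\eqref{eqn:suminfty}) is correct and matches the paper, but to repair the proof you should replace the raw Theorem~\ref{push} bound by the monotone functional $f(t)$ of Lemma~\ref{lemma:WG}.
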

\begin{remark}
	In the next section we exhibit a class of random matrix sequences $\{W(t)\}$ that satisfy the conditions of Proposition~\ref{prop:pushconv} and thus admit average consensus almost surely.
\end{remark}
\begin{proof}
Proof of this proposition is similar to the proof of Theorem 4.1 in~\cite{FB-VB-PT-JT-MV:10}, where the sequence $\{W(t)\}$ is assumed to be stationary; however, since we do not assume stationarity, we provide a proof. By Proposition~\ref{prop:sh2} part (ii), for any $\varepsilon>0 $ there is a time $t_{\varepsilon}$ such that for all $t\geq t_{\varepsilon}$ and $i\in [n]$,
\begin{align*}
\sum_{j=1}^{n}|[W(t:0)]_{ij}-\frac{1}{n}\sum_{k=1}^{n}[W(t:0)]_{ik}|<{\delta\varepsilon}.
\end{align*}
By assumption, there exists $t_{\varepsilon}'\geq t_{\varepsilon}$ such that $y(t_{\varepsilon}')\geq \delta$, which implies that $	f(t_{\varepsilon}')<{\varepsilon}$, where $f(t)$ is defined as in Lemma~\ref{lemma:WG}. Since by Lemma~\ref{lemma:WG}, $f(t)$ is non-increasing, $f(t)<{\varepsilon}$ for all $t\geq t_{\varepsilon}'$, meaning that $f(t)$ converges to zero as $t\rightarrow\infty$ and hence,
$	\lim_{t\rightarrow\infty}\left|z_i(t+1) - \bar{x}\right| = 0 $, almost surely. 
\end{proof}	
\section{B-Irreducible Sequences}\label{sec:convergencerate}
In this section we characterize a class of random column-stochastic matrices that admits average consensus and we provide a rate of convergence of the push-sum algorithm for this class. To achieve this, we restrict the class of random matrices that we consider; as we will point out later, this restricted class still includes many interesting sequences of random matrices.

In the following discussion, we assume that the push-sum dynamics is generated by a column-stochastic matrix sequence $\{W(t)\}$ where 
\begin{align}\label{eqn:wt}
W_{ij}(t) = \frac{R_{ij}(t)}{\sum_{i=1}^{n}R_{ij}(t)},
\end{align}
for all $ i,j \in [n] $, where $ R_{ij}(t)$ is $ 1 $ with probability $ P_{ij}(t) $, and is $ 0 $ with probability $ 1-P_{ij}(t) $ such that ${\{R_{ij}(t):i,j\in [n],t\geq0\}}$ are independent random variables. In other words, there is a random communication link between node $v_j$ and $v_i$ at time $t$ with probability $P_{ij}(t)$. Note that $\{W(t)\}$ is a sequence of independent random column-stochastic matrices.

Furthermore, for the probability matrix sequence $\{P(t)\}_{t\geq 0}$, we assume that the following holds. 
\begin{assumption}\label{assum:Pt}
	$ \{P (t)\}_{t\geq 0}$ is a sequence of $n\times n$ matrices with $ {P_{ij}(t) \in [0,1] }$. Additionally, we assume that $P_{ii}(t)=1$, for all $ v_i \in \V$. Also, for some constant $\eps>0$, we assume that  $P_{ij}(t)\geq \eps$ for all $i,j\in[n]$ and all $t\geq 0$ such that $P_{ij}(t)\not=0$. Finally, we assume that the sequence $\{P(t)\}_{t\geq 0}$ is $B$-irreducible, i.e.\ for some integer $B>0$, \[\sum_{t'=tB}^{(t+1)B-1}P(t)\]
	is irreducible for all $t\geq 0$.		
\end{assumption}

We next state the main result of this section.
\begin{theorem}\label{theorem:main-rate}
Consider the push-sum algorithm~\eqref{eqn:mtxalgorithm} and let $\{W(t)\}$ be a sequence of random column-stochastic matrices defined by \eqref{eqn:wt}, where $\{P(t)\}$ satisfies Assumption~\ref{assum:Pt}. Let $p = \eps^{2(n-1)}$. Then, for any \sloppy ${t\geq B+\frac{2nB}{p} }$, where $n\geq 2$
\begin{align*}
	\Ex\left[ \ln\left(\left|z_i(t+1) - \bar{x}\right|\right)\right]\leq c_0 -c_1t
	\end{align*}
where 
\begin{align*}
c_0 =&  \ln\left( 2\|x(0)\|_1 \right) + \ln (n)\left(\frac{nB}{p}+B\right) +\ln(15),\cr
c_1 = &- \frac{p}{2nB}\ln \left(1-\frac{1}{n^{\frac{4nB}{p}}}\right).
\end{align*}
\end{theorem}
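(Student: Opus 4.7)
The plan is to apply the pointwise bound of Theorem~\ref{push}, take logarithms to obtain $\ln|z_i(t+1) - \bar{x}| \leq \ln(2\|x(0)\|_1) - \ln y_i(t+1) + \ln \Lambda_{t,0}$, and then independently control $\Ex[-\ln y_i(t+1)]$ and $\Ex[\ln \Lambda_{t,0}]$ via concentration arguments on the randomness of $B$-windows. An important auxiliary ingredient is the \emph{trivial} bound $|z_i(t+1) - \bar{x}| \leq 2\|x(0)\|_\infty \leq 2\|x(0)\|_1$, which holds almost surely because $z_i(t+1)$ is a convex combination of $\{x_j(0)\}$: both the numerator $x_i(t+1)$ and denominator $y_i(t+1)$ are obtained by applying the same column-stochastic product $W(t{:}0)$ to $x(0)$ and $\ones_n$, so the ratio is a barycentric combination of the initial values. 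This trivial bound is essential to absorb low-probability bad events.

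The first step is to show that, under Assumption~\ref{assum:Pt}, each $B$-window $[mB, (m+1)B-1]$ is \emph{good} (its random union graph is strongly connected) with probability at least $p = \eps^{2(n-1)}$, and that goodness is independent across disjoint windows. The argument: $B$-irreducibility implies that the deterministic digraph on edges $(v_j, v_i)$ with $\sum_{t'} P_{ij}(t') > 0$ inside the window is strongly connected, hence contains a spanning in-arborescence and a spanning out-arborescence rooted at any chosen vertex, totaling at most $2(n-1)$ edges. Each such edge appears in the random union with probability at least $\eps$ (since $P_{ij}(t') \geq \eps$ for some $t'$), and by independence of $\{R_{ij}(t)\}$ across distinct $(i,j)$, all $2(n-1)$ edges appear jointly with probability at least $\eps^{2(n-1)} = p$. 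Each good $B$-window advances the index $k_q$ of \eqref{def:kq} by one.

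To bound $\Ex[\ln \Lambda_{t,0}]$, let $M$ be the number of good windows in $[0, t]$; then $M$ stochastically dominates Binomial$(\lfloor t/B \rfloor, p)$, and a multiplicative Chernoff bound yields $\Pr(M \leq tp/(2B)) \leq \exp(-tp/(8B))$. On the complement, $|\Q_{t,0}| \geq M/n \geq tp/(2nB)$ and $\sum_{q \in \Q_{t,0}} \ell_q \leq t$, and by the concavity of $x \mapsto \ln(1 - n^{-x})$ together with Jensen's inequality,
\[
\ln \Lambda_{t,0} \;\leq\; |\Q_{t,0}|\,\ln\!\left(1 - n^{-t/|\Q_{t,0}|}\right) \;\leq\; \frac{tp}{2nB}\,\ln\!\left(1 - \frac{1}{n^{4nB/p}}\right) \;=\; -c_1\,t,
\]
where the exponent $4nB/p$ absorbs slack from rounding and from the condition $t \geq B + 2nB/p$. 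For $\Ex[-\ln y_i(t+1)]$, I would introduce the event $\mathcal{E}_y$ that the last $L = nB/p + B$ time steps contain at least $n$ good $B$-windows; another Chernoff bound gives $\Pr(\mathcal{E}_y^c) \leq \exp(-\Omega(n))$. On $\mathcal{E}_y$, the product $W(t{:}t-L+1)$ spans $n$ strongly connected sub-windows, so by Lemma~\ref{lemma:compgraph} with the self-loop structure it is entry-wise at least $n^{-L}$; combined with $\sum_j y_j(t-L+1) = n$, this yields $y_i(t+1) \geq n^{1-L}$ and hence $-\ln y_i(t+1) \leq (L-1)\ln n = (nB/p + B - 1)\ln n$.

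Finally, I combine the pieces. On the intersection of the two good events, Theorem~\ref{push} gives $\ln|z_i(t+1) - \bar{x}| \leq c_0 - c_1 t$, absorbing numerical constants into $\ln 15$. On the complement, I invoke the trivial bound $\ln|z_i(t+1) - \bar{x}| \leq \ln(2\|x(0)\|_1)$. Taking expectations, the bad-event contribution is $\ln(2\|x(0)\|_1)\,[\exp(-tp/(8B)) + \exp(-\Omega(n))]$, which, together with the positive slack $(c_1 t - c_0)\Pr(\text{bad})$ generated by the good-event multiplication, is controlled via the precise choice of Chernoff thresholds and absorbed into $\ln 15$ for $t \geq B + 2nB/p$. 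The main obstacle is this final combination: since the failure probability of $\mathcal{E}_y$ does \emph{not} decay with $t$, the trivial bound $|z_i(t+1) - \bar{x}| \leq 2\|x(0)\|_\infty$ is indispensable --- without it, the worst-case $(t+1)\ln n$ contribution of $-\ln y_i(t+1)$ on $\mathcal{E}_y^c$ would grow linearly in $t$ and could not be swallowed by any finite constant.
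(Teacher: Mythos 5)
Your skeleton (start from Theorem~\ref{push}, take logarithms, bound $\Ex[-\ln y_i(t+1)]$ and the $\Lambda_{t,0}$ term separately, with each $B$-window ``good'' with probability $p=\eps^{2(n-1)}$) matches the paper, and your Chernoff/Jensen treatment of $\Lambda_{t,0}$ is essentially the paper's Lemma~\ref{Lemma:product-rate} combined with Lemma~\ref{lemma:opt1}. But the way you handle $\Ex[-\ln y_i(t+1)]$ has a genuine gap, and it is exactly the one you flag at the end without actually resolving. Writing $G$ for your good event, the decomposition gives $\Ex[\ln|z_i(t+1)-\bar x|]\leq (c_0-c_1t)\Pr(G)+\ln(2\|x(0)\|_1)\Pr(G^c) = (c_0-c_1t)+(c_1t-c_0)\Pr(G^c)+\ln(2\|x(0)\|_1)\Pr(G^c)$. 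The trivial bound $|z_i(t+1)-\bar x|\leq 2\|x(0)\|_\infty$ (which is correct, and does control the last term) does nothing about the middle term $(c_1t-c_0)\Pr(G^c)$, which grows \emph{linearly} in $t$ whenever $\Pr(G^c)$ is bounded away from zero. Since you yourself note that $\Pr(\mathcal{E}_y^c)$ does not decay with $t$, this term cannot be ``absorbed into $\ln 15$'' by any choice of thresholds; at best you would prove a strictly degraded rate $c_1(1-\Pr(\mathcal{E}_y^c))$, not the claimed $c_1$. (For the $\Lambda_{t,0}$ split the same phenomenon is harmless because there $\Pr(\text{bad})\leq\exp(-tp/(8B))$ and $t e^{-tp/(8B)}$ is bounded; the problem is specific to the $y_i$ term.) There is also a quantitative error upstream: with $L=nB/p+B$, the last $L$ steps contain only $n/p+1$ windows, whose expected number of good ones is about $n+p$; asking for at least $n$ successes is asking to be within $p$ of the mean, so Chernoff gives a failure probability near $1/2$, not $\exp(-\Omega(n))$.

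The paper sidesteps all of this by never conditioning on a $y$-related event at all. Its Lemma~\ref{Lemma:El1} bounds $\Ex[\ln(1/y_i(t))]$ \emph{unconditionally}: it introduces the backward waiting time $\tau_t$ until $n$ good $B$-windows have accumulated (looking backward from $t$), shows $W_{ij}(t:0)\geq n^{-(\tau_t B+B)}$ pathwise so that $\ln(1/y_i(t))\leq \ln(n)(\tau_tB+B)$, and then bounds $\Ex[\tau_t]\leq n/p$ by comparison with a negative binomial random variable. This yields $\Ex[\ln(1/y_i(t))]\leq\ln(n)(nB/p+B)$ for every $t$ with no bad event and no trivial bound. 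Similarly, for the product term the paper bounds $\Ex[\Lambda_{t,0}]$ rather than $\Ex[\ln\Lambda_{t,0}]$ — since $\Lambda_{t,0}\in[0,1]$, the low-probability event contributes at most its (Hoeffding-decaying) probability additively — and only then applies Jensen in the form $\Ex[\ln\Lambda_{t,0}]\leq\ln\Ex[\Lambda_{t,0}]$. To repair your argument you should replace the event $\mathcal{E}_y$ by an expectation bound on the backward waiting time, or otherwise ensure that every bad event you condition on has probability decaying at least like $1/t$.
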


The proof relies on the following results.
\begin{lemma}\label{lm:PtDIFP}
Let $\{W(t)\}$ be a sequence of random column-stochastic matrices defined by \eqref{eqn:wt}, where $\{P(t)\}$ satisfies Assumption~\ref{assum:Pt}.  Let $ \{k_q\} $ and $\{\ell_q\} $ be the sequences defined, respectively, in~\eqref{def:kq} and~\eqref{def:lq} along each sample path. Then
\begin{enumerate}[(i)]
\item the sequence $\{W(t)\}$ has the directed infinite flow property almost surely, and 
\item for the sequence $\{\ell_q\}$, we have
\begin{align*}
\sum_{q=0}^{\infty}\frac{1}{n^{\ell_q}} = \infty,\quad\text{almost surely.}
\end{align*}
\end{enumerate}
\end{lemma}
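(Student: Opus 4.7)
For part (i), fix a non-trivial $S \subset [n]$ and a block $s \geq 0$. By the $B$-irreducibility of $\{P(t)\}$, the graph of $\sum_{t = sB}^{(s+1)B - 1} P(t)$ is strongly connected, so there exist $i^* \in S$, $j^* \in \bar S$ and $t^* \in [sB, (s+1)B - 1]$ with $P_{i^*j^*}(t^*) > 0$; Assumption~\ref{assum:Pt} then gives $P_{i^*j^*}(t^*) \geq \eps$. Since $R_{j^*j^*}(t^*) = 1$ deterministically, the column sum $\sum_k R_{kj^*}(t^*)$ is at most $n$, so whenever $R_{i^*j^*}(t^*) = 1$, an event of probability at least $\eps$, we have $W_{i^*j^*}(t^*) \geq 1/n$ and hence $\sum_{t = sB}^{(s+1)B - 1} W_{S\bar S}(t) \geq 1/n$. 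These block-level events depend on disjoint groups of the independent Bernoullis $\{R_{ij}(t)\}$, so they are independent across $s$, and the second Borel--Cantelli lemma gives $\sum_{t \geq 0} W_{S\bar S}(t) = \infty$ almost surely. Intersecting over the finitely many non-trivial $S$ yields (i).

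For part (ii), I would upgrade the single-edge argument to a full spanning-subgraph argument inside each block. In block $s$, pick a strongly connected spanning subgraph $H_s$ of the graph of $\sum_{t = sB}^{(s+1)B-1} P(t)$ using at most $2(n-1)$ edges (for example, the union of an in-arborescence and an out-arborescence rooted at a common vertex), and for each $(i,j) \in H_s$ choose a time $t_{ij}^s$ in the block with $P_{ij}(t_{ij}^s) \geq \eps$. By independence of the Bernoullis, the event that $R_{ij}(t_{ij}^s) = 1$ for every $(i,j) \in H_s$ has probability at least $\eps^{2(n-1)} = p$, and on this event every edge of $H_s$ appears in the union of graphs of $W(t)$ across the block, so (using that each $W(t)$ has positive diagonal entries) the graph associated with the product $W((s+1)B-1:sB)$ is strongly connected. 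Declaring such a block \emph{good} defines independent Bernoulli indicators $X_s$ with $\Pr(X_s = 1) \geq p$; let $G_1 < G_2 < \cdots$ be the indices of good blocks.

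A straightforward induction gives $k_q \leq (G_q + 1)B$: if $k_{q-1} \leq (G_{q-1} + 1)B$, then $G_q B \geq (G_{q-1} + 1)B \geq k_{q-1}$, so block $G_q$ lies entirely inside $[k_{q-1}, (G_q + 1)B - 1]$ and its strong connectivity forces $k_q \leq (G_q + 1)B$. Therefore
\begin{align*}
\sum_{q=1}^Q \ell_q = k_{Qn} \leq (G_{Qn} + 1)B.
\end{align*}
The gaps $G_j - G_{j-1}$ are independent and stochastically dominated by a geometric random variable with parameter $p$ (by a coupling using $\Pr(X_s = 1) \geq p$), so the strong law of large numbers applied to the dominating i.i.d.\ sequence yields $\limsup_Q G_{Qn}/(Qn) \leq 1/p$ almost surely, whence $\limsup_Q (1/Q)\sum_{q=1}^Q \ell_q \leq nB/p$ almost surely. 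If only finitely many $q$ satisfied $\ell_q \leq 2nB/p$, the Cesaro average would eventually exceed $2nB/p > nB/p$, contradicting this bound; hence $\ell_q \leq 2nB/p$ for infinitely many $q$ almost surely, which gives $\sum_q n^{-\ell_q} = \infty$ almost surely.

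The main obstacle in part (ii) is the random alignment of the stopping times $k_q$ with the fixed block boundaries, which prevents a direct application of Borel--Cantelli to the $\ell_q$'s themselves. The inductive bound $k_q \leq (G_q + 1)B$ sidesteps this by reducing the analysis to a renewal-type question about the indices of good blocks, which the strong law of large numbers settles.
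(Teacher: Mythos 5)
Your proof is correct, but part (ii) takes a genuinely different route from the paper's. For part (i) the two arguments are close in spirit: the paper applies the second Borel--Cantelli lemma to the independent events $\Ev_t$ that the whole block sum $\sum_{t'=tB}^{(t+1)B-1}W(t')$ is irreducible (probability at least $\eps^{2(n-1)}$), whereas you apply it per cut $S$ to the more elementary single-edge event of probability at least $\eps$ and then intersect over the finitely many cuts; both are fine. For part (ii) the paper instead groups $n$ consecutive good $B$-blocks into events $\Ec_t=\bigcap_{t'=tn}^{(t+1)n-1}\Ev_{t'}$, applies Borel--Cantelli, and asserts that $\Ec_t$ occurring infinitely often yields $\ell_q\leq nB$ infinitely often. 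Your renewal-type argument --- the inductive bound $k_q\leq (G_q+1)B$, the coupling with i.i.d.\ Bernoulli($p$) minorants, and the Ces\`aro bound $\limsup_Q \frac{1}{Q}\sum_{q\le Q}\ell_q\leq nB/p$ --- is longer and gives the weaker threshold $2nB/p$ in place of $nB$, but it confronts head-on the alignment of the stopping times $k_{qn}$ with the fixed block boundaries, which is exactly the point the paper's one-line deduction from ``$\Ec_t$ i.o.'' passes over (the index $\max\{q:k_q\le tnB\}$ need not be a multiple of $n$, so $n$ good blocks give $n$ consecutive small increments of $k_q$ but not immediately a small $\ell_q$). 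Since only divergence of $\sum_q n^{-\ell_q}$ is needed, the worse constant costs nothing. One small imprecision: the gaps $G_j-G_{j-1}$ need not be independent when the $X_s$ are independent but not identically distributed; however, the coupling you invoke gives $G_j\leq G_j^Y$ for the i.i.d.\ dominating sequence directly, and the strong law applied to $G_j^Y$ is all your argument actually uses, so this is cosmetic.
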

\begin{proof}
We start by proving~(i). For any $ t\geq0$, let us define the sequence of events 
\begin{align}\label{eqn:At}
\Ev_t=\Bigl\{\sum_{t'=tB}^{(t+1)B-1}W(t') \text{ is irreducible}\Bigr\}.
\end{align}
Note that for all $t\geq 0$, the events $\{\Ev_t\}_{t\geq 0}$ are independent and that $\Ev_t$ implies $\sum_{t'=tB}^{(t+1)B-1} W_{S\bar{S}}(t')>0$, for any non-trivial $S\subset [n]$. Since $\min_{i,j\in [n]:P_{ij}(t)>0}P_{ij}(t) >\eps>0$, for all $t\geq 0$, we have 
\begin{align*}
\Pr(\Ev_t)\geq\eps^{2(n-1)}.
\end{align*}
This follows from~\cite[Corollary 5.3.6]{digraph} and the fact that $\{P(t)\}$ is \sloppy \mbox{$B$-irreducible} and hence, there is at least a subset of size $2(n-1)$ of the edges $(v_j,v_i)$ that form a strongly connected graph and $P_{ij}(t')\geq \eps$ for some $t'\in [tB,(t+1)B-1]$.

Since the events $\Ev_t$ are independent, hence, by the second Borel-Contelli lemma~\cite[Theorem 2.3.6]{RD:10}, ${\sum_{t'=tB}^{(t+1)B-1}W_{S\bar{S}}(t') >0}$ infinitely often, almost surely. Moreover, since every positive entry of $W(t)$ is bounded below by~$\frac{1}{n}$, for any non-trivial ${S\subset [n]}$, $\sum_{t=0}^{\infty}W_{S\bar{S}}(t) = \infty$, almost surely, implying that $\{W(t)\}$ has the directed infinite flow property, almost surely. This also implies that $k_q$ and $\ell_q$ are finite for all $q$, almost surely. This completes the proof of~(i).

To prove~(ii), let us define, for all $t\geq 0$, 
\begin{align}\label{eqn:Ct}
\Ec_t=\bigcap_{t'=tn}^{(t+1)n-1}\Ev_{t'},
\end{align}
where $\Ev_t$ is defined in \eqref{eqn:At}. Since the $\Ev_t$ are independent, $\Pr(\Ec_t) = \prod_{t'=tn}^{(t+1)n-1} \Pr(\Ev_{t'})\geq \eps^{2n(n-1)}$ for all ${t\geq 0}$. This implies that ${\sum_{t=0}^{\infty}\Pr(\Ec_t) =\infty}$. Again, since the $\Ec_t$ are independent, by the Borel-Contelli lemma, $\Ec_t$ occurs infinitely often, almost surely. This implies that ${\ell_q\leq nB}$ infinitely often, almost surely. Hence, $\sum_{q=1}^{\infty}\frac{1}{n^{\ell_q}} = \infty$, almost surely.
\end{proof}
\begin{lemma}
In the push-sum algorithm~\eqref{eqn:mtxalgorithm} let $\{W(t)\}$ be a sequence of random column-stochastic matrices corresponding to the sequence $\{P(t)\}$ satisfying Assumption~\ref{assum:Pt}. Then for all $t\geq 0$ there exists $t'\geq t$ such that for all $i\in [n]$, $y_i(t')\geq \frac{1}{n^{nB}}$.
\end{lemma}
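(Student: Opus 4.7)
The plan is to locate, for any given $t \geq 0$, a later time $t'$ at the end of an $n$-fold ``irreducible block'' whose total length is at most $nB$, and then combine a uniform positivity bound on the associated matrix product with the conservation of $\sum_i y_i(t)$ under column-stochastic dynamics. The main ingredients are already in place: Lemma~\ref{lm:PtDIFP} together with its proof, and the entrywise lower bound machinery used inside the proof of Proposition~\ref{prop:shrink}.

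First I would record the mass conservation law: since each $W(t)$ is column-stochastic, $\sum_{i=1}^n y_i(t+1) = \sum_{i=1}^n \sum_{j=1}^n W_{ij}(t) y_j(t) = \sum_{j=1}^n y_j(t)$, so from $y(0) = \ones_n$ we get $\sum_{i=1}^n y_i(t) = n$ for every $t \geq 0$. Next, by the Borel--Cantelli argument inside the proof of Lemma~\ref{lm:PtDIFP}(ii), the event $\{\ell_q \leq nB\}$ occurs for infinitely many $q$, almost surely. Fix a sample path in this probability-one event, and for the given $t$ choose $q$ so large that $k_{nq} \geq t$ and $\ell_q \leq nB$; set $t' := k_{nq}$.

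The core step is to bound the product $M := W(k_{nq}-1 : k_{n(q-1)})$ from below entrywise. This is exactly the $n$-block product handled in Proposition~\ref{prop:shrink}: by the definition of $k_q$, each factor $W(k_{(q-1)n+j+1}-1 : k_{(q-1)n+j})$ is irreducible; every $W(t)$ is strongly aperiodic thanks to the standing self-loop assumption; and the positive entries of $W(t)$ are at least $1/n$, since each column of $W(t)$ has at most $n$ nonzero entries summing to $1$. Applying the column-stochastic analogue of Lemma~\ref{lemma:Asu}(ii) to this $n$-fold product then yields $M_{ij} \geq 1/n^{\ell_q}$ for all $i,j \in [n]$. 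Using $y(t') = M\, y(k_{n(q-1)})$ together with nonnegativity of $y$ and the conservation law gives, for every $i \in [n]$,
\begin{align*}
y_i(t') \;=\; \sum_{j=1}^{n} M_{ij}\, y_j(k_{n(q-1)}) \;\geq\; \frac{1}{n^{\ell_q}} \sum_{j=1}^{n} y_j(k_{n(q-1)}) \;=\; \frac{n}{n^{\ell_q}} \;\geq\; \frac{1}{n^{nB-1}} \;\geq\; \frac{1}{n^{nB}},
\end{align*}
which is the desired bound.

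The only real obstacle is conceptual rather than technical: one cannot hope to lower bound $y_i(t)$ at every $t$, because a node may transiently forward almost all of its mass to others, so the statement is only meaningful along a carefully chosen subsequence. Pinning down that this subsequence is precisely $t' = k_{nq}$ with $\ell_q \leq nB$ is the crux; after that, the bound is a one-line consequence of the Proposition~\ref{prop:shrink} positivity estimate and column-stochastic mass conservation.
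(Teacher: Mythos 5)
Your core mechanism is the same as the paper's: an $n$-fold product of irreducible, strongly aperiodic blocks is positive (Lemma~\ref{lemma:compgraph}), each entry of that product is at least $n^{-(\text{window length})}$ (Lemma~\ref{lemma:Asu}), and column-stochasticity transfers this to a lower bound on $y_i$; your mass-conservation identity $\sum_i y_i(t)\equiv n$ is just a repackaging of the paper's step ``since $W(t)$ is column-stochastic, $W_{ij}(tnB+nB-1:0)\ge n^{-nB}$''. The substantive difference is where you anchor $t'$. The paper takes $t'=(\tau+1)nB$ for a sufficiently large $\tau$ at which the event $\Ec_\tau$ of~\eqref{eqn:Ct} occurs; since the $\Ec_\tau$ are independent with probability bounded away from zero, Borel--Cantelli gives infinitely many such $\tau$, and each one produces a window $[\tau nB,(\tau+1)nB)$ of length exactly $nB$ over which the product is positive with entries at least $n^{-nB}$, regardless of anything that happened before time $\tau nB$. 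You instead take $t'=k_{nq}$ for a $q$ with $\ell_q\le nB$, invoking the claim ``$\ell_q\le nB$ infinitely often'' from the proof of Lemma~\ref{lm:PtDIFP}(ii).

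That reliance is the weak point. A single occurrence of $\Ec_\tau$ does not by itself force any \emph{aligned} block $[k_{(q-1)n},k_{qn})$ to have length at most $nB$: letting $m^*=\max\{m:k_m\le\tau nB\}$, the index $m^*$ need not be a multiple of $n$, and for the unique $q$ with $(q-1)n\le m^*<qn$ the starting point $k_{(q-1)n}$ may lie far before the window, making $\ell_q$ arbitrarily large even though $\Ec_\tau$ holds. Even exploiting two consecutive occurrences $\Ec_\tau\cap\Ec_{\tau+1}$ one only obtains an aligned block of length at most $2nB$, which through your computation yields $y_i(t')\ge n^{-(2nB-1)}$, weaker than the stated bound. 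So as written your argument either inherits this alignment gap or proves a weaker constant. The fix is exactly the paper's move: for this lemma, bypass the $k_q/\ell_q$ bookkeeping and work directly with the window $[\tau nB,(\tau+1)nB)$ on which $\Ec_\tau$ occurs; everything else in your write-up --- positivity of the block product, the $n^{-nB}$ entrywise bound, and the mass-conservation finish --- then goes through verbatim.
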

\begin{proof}
Consider the event $\Ec_t$ defined in \eqref{eqn:Ct}. At any time $\Ec_t$ occurs, by Lemma~\ref{lemma:compgraph}, the product $W(tnB+nB-1:tnB)$ is positive; moreover, by Lemma~\ref{lemma:Asu}, $W_{ij}(tnB+nB-1:tnB)\geq \frac{1}{n^{nB}}$ for all $i,j\in [n]$. Since $W(t)$ is column-stochastic, we have ${W_{ij}(tnB+nB-1:0)\geq \frac{1}{n^{nB}}}$. By Lemma~\ref{lm:PtDIFP}, $\Ec_t$ occurs infinitely often, almost surely; therefore, for all $t\geq 0$ there exists $t'\geq t$ such that for all $i\in [n]$, $y_i(t')\geq \frac{1}{n^{nB}}$.
\end{proof}
The preceding two lemmas and Proposition~\ref{prop:pushconv} imply the following.
\begin{corollary}
Let $\{W(t)\}$ be a sequence of random column-stochastic matrices corresponding to the sequence $\{P(t)\}$ satisfying Assumption~\ref{assum:Pt}. Then $ \{W(t)\} $ admits average consensus, almost surely.
\end{corollary}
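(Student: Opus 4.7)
The plan is to verify that all three hypotheses of Proposition~\ref{prop:pushconv} are satisfied almost surely under Assumption~\ref{assum:Pt}, and then invoke that proposition to conclude average consensus. This corollary is essentially a bookkeeping step that assembles the preceding two lemmas.

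First, I would invoke Lemma~\ref{lm:PtDIFP}(i) to obtain the directed infinite flow property of $\{W(t)\}$ almost surely, which is the first hypothesis of Proposition~\ref{prop:pushconv}. Next, Lemma~\ref{lm:PtDIFP}(ii) directly provides $\sum_{q=1}^{\infty}\frac{1}{n^{\ell_q}}=\infty$ almost surely, which is exactly condition~\eqref{eqn:suminfty} and the second hypothesis. It remains to verify the uniform lower bound requirement on the $y_i$ trajectories: that there exists $\delta>0$ such that for every $t\geq 0$ one can find $t'\geq t$ with $y_i(t')\geq \delta$ for all $i\in[n]$. This is exactly the content of the unnamed lemma immediately preceding the corollary, with $\delta = \tfrac{1}{n^{nB}}$.

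With all three hypotheses verified on a common event of probability one (the intersection of the three almost-sure events, which is again of probability one), Proposition~\ref{prop:pushconv} yields $\lim_{t\to\infty}|z_i(t+1)-\bar{x}|=0$ almost surely for every $i\in[n]$. Since this is precisely what it means for $\{W(t)\}$ to admit average consensus almost surely under the push-sum algorithm, the corollary follows.

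There is no substantive obstacle here: the real work has been done in Lemma~\ref{lm:PtDIFP} (using the Borel--Cantelli lemma to exploit $B$-irreducibility and the uniform positive lower bound $\varepsilon$ on nonzero entries of $P(t)$) and in the auxiliary lemma providing the $\tfrac{1}{n^{nB}}$ bound on $y_i$ via positivity of long column-stochastic products. The only minor point worth being explicit about is that the three almost-sure events can be intersected, so that on a single full-measure event all conditions of Proposition~\ref{prop:pushconv} hold simultaneously, which is what licenses the pathwise application of that proposition.
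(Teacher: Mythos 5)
Your proof is correct and follows exactly the paper's route: the corollary is stated in the paper as an immediate consequence of Lemma~\ref{lm:PtDIFP}, the unnamed lemma giving $y_i(t')\geq \tfrac{1}{n^{nB}}$, and Proposition~\ref{prop:pushconv}. Your added remark about intersecting the three full-measure events is a harmless and correct piece of bookkeeping that the paper leaves implicit.
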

\begin{lemma}\label{Lemma:product-rate}
Let $\{W(t)\}$ be a sequence of random column-stochastic matrices corresponding to the sequence $\{P(t)\}$ satisfying Assumption~\ref{assum:Pt}. Let $ \{\ell_q\} $ be the sequence defined in~\eqref{def:lq} along each sample path. For all ${t\geq B + \frac{2nB}{p}}$, we have
\begin{align*}
\Ex\left[\Lambda_{t,0}\right] \leq \exp\left(-\beta_t^2\left(\frac{t}{B}-2\right) \right) + 2\left(1-\frac{1}{n^{\frac{4nB}{p}}}\right)^{ \frac{pt}{2nB}},
\end{align*}
where $\Lambda_{t,0} = \prod_{q\in \Q_{t,0}}(1-\frac{1}{n^{l_q}})$, $\beta_t = \frac{p}{2}-\frac{2pB}{t}$, and ${p=\eps^{2(n-1)}}$.
\end{lemma}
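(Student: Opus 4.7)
The plan is a ``truncate then concentrate'' argument that exploits the independent Bernoulli structure uncovered in Lemma~\ref{lm:PtDIFP}. First I would truncate the product $\Lambda_{t,0}$: since every factor $1-1/n^{\ell_q}$ lies in $(0,1)$, discarding those with $\ell_q>L$ for $L:=4nB/p$ only weakens the bound, yielding
\[
\Lambda_{t,0} \;\leq\; \Bigl(1-\tfrac{1}{n^{L}}\Bigr)^{M^{*}_{t}},\qquad M^{*}_{t}\;:=\;\bigl|\{q\in \Q_{t,0}\,:\,\ell_q\leq L\}\bigr|.
\]
The problem then reduces to showing that $M^{*}_{t}$ is at least $M_0:=pt/(2nB)$ with probability at least $1-\exp(-\beta_t^{2}(t/B-2))$, up to a harmless multiplicative constant.

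To produce this lower bound, I would return to the windows $\Ev_i=\{\sum_{t'=iB}^{(i+1)B-1}W(t')\text{ irreducible}\}$ from Lemma~\ref{lm:PtDIFP}, which are independent with $\Pr(\Ev_i)\geq p$. Let $S_N$ be the number of these events that occur among the $N=\lfloor t/B\rfloor$ windows covering $[0,t)$. A short induction on the definition~\eqref{def:kq} shows $k_m\le (T_m+1)B$, where $T_m$ denotes the index of the $m$-th successful window (any successful window after $k_{m-1}$ closes out $k_m$ within~$B$ time steps). In particular $|\Q_{t,0}|\geq \lfloor S_N/n\rfloor$. Combining this with the deterministic identity $\sum_{q\in \Q_{t,0}}\ell_q\leq t$---which forces at most $t/L=pt/(4nB)$ of the $\ell_q$ to exceed~$L$---gives the key inequality $M^{*}_{t}\geq\lfloor S_N/n\rfloor - pt/(4nB)$, so that $\{M^{*}_{t}<M_0\}$ is contained in a left-tail deviation event for $S_N$ of order $pt/B$.

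The final step is a single application of Hoeffding's inequality to the independent Bernoulli variables $\mathbf{1}_{\Ev_i}$: since $\Ex[S_N]\geq pN\geq p(t/B-1)$, the appropriate deviation from the mean yields a tail of the form $\exp(-\beta_t^{2}(t/B-2))$, with $\beta_t=p/2-2pB/t$ chosen so that $\beta_t\cdot(t/B)$ matches this deviation and with the hypothesis $t\geq B+2nB/p$ guaranteeing $\beta_t>0$. Splitting $\Ex[\Lambda_{t,0}]$ over the good event $\{M^{*}_{t}\geq M_0\}$ and its complement and using $\Lambda_{t,0}\leq 1$ on the latter produces the two claimed summands. The principal obstacle I anticipate is not probabilistic but arithmetic: aligning the Hoeffding exponent with the precise constant $\beta_t$ and absorbing the combined losses from the floor $\lfloor S_N/n\rfloor$, the rounding of $N$, and the ``at most $t/L$ long $\ell_q$'s'' estimate into the factor $2$ that sits in front of the geometric term in the stated bound.
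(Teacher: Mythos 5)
Your probabilistic skeleton coincides with the paper's: the independent window events $\Ev_i$ with $\Pr(\Ev_i)\geq p$, the observation that each successful window after $k_{m-1}$ closes out $k_m$ within $B$ steps (so that $|\Q_{t,0}|\geq\lfloor S_N/n\rfloor$), the split of $\Ex[\Lambda_{t,0}]$ over a good/bad event for the count of successful windows, and a single application of Hoeffding's inequality. The genuine gap is in the deterministic step that converts ``many indices in $\Q_{t,0}$'' into a bound on the product. The paper invokes Lemma~\ref{lemma:opt1}: since $\zeta\mapsto\ln\left(1-n^{-\zeta}\right)$ is concave, subject to $\sum_{q\in \Q_{t,0}}\ell_q\leq t$ the product $\prod_{q\in\Q_{t,0}}\left(1-n^{-\ell_q}\right)$ is maximized when all $\ell_q$ are equal to $t/|\Q_{t,0}|\leq 4nB/p$, so \emph{every one} of the $\geq\lfloor pt/(2nB)\rfloor$ factors is bounded by $1-n^{-4nB/p}$. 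Your truncation-plus-pigeonhole substitute keeps only the factors with $\ell_q\leq L=4nB/p$, and the pigeonhole bound guarantees that no more than $t/L=pt/(4nB)$ factors are discarded --- which is half of the available lower bound $pt/(2nB)$ on $|\Q_{t,0}|$. On the good event you therefore retain only about $pt/(4nB)$ factors and obtain $\left(1-n^{-4nB/p}\right)^{pt/(4nB)}$, not the claimed exponent $pt/(2nB)$.

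This loss cannot be absorbed into the leading factor $2$, because it sits in an exponent growing linearly in $t$, and it cannot be repaired by re-tuning parameters. Forcing $M^{*}_{t}\geq pt/(2nB)$ requires $S_N\gtrsim 3pt/(4B)$ rather than $S_N\gtrsim pt/(2B)$, which shrinks the admissible Hoeffding deviation from roughly $(p/2)N$ to roughly $(p/4)N$ and degrades the first term to $\exp\left(-\beta_t^2\left(\frac{t}{B}-2\right)\right)$ with $\beta_t\approx p/4$ instead of $p/2$; raising the truncation level $L$ instead degrades the base $1-n^{-L}$ of the geometric term. Whichever way you allocate the slack, the truncation route yields a bound of the same \emph{form} but with strictly weaker constants than those stated, so it does not prove the lemma as written (and your intermediate requirement that the deviation be positive also needs $t\geq 4B+4nB/p$, stronger than the stated threshold). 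To recover the exact statement, replace the truncation by the Jensen-type estimate $\prod_{i=1}^{q}\left(1-n^{-l_i}\right)\leq\left(1-n^{-t/q}\right)^{q}$ of Lemma~\ref{lemma:opt1} applied with $q=\left\lfloor \frac{pt}{2nB}\right\rfloor$, followed by the elementary inequality $t/\left\lfloor \frac{pt}{2nB}\right\rfloor\leq \frac{4nB}{p}$ valid for $t\geq B+\frac{2nB}{p}$; the factor $2$ then only has to absorb the single floor in the exponent.
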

\begin{proof}
Let $X_B(t)$ be the indicator of the event $\Ev_t$, i.e., 
\begin{align*}
X_B(t) = 
\begin{cases}
1 &\text{if }\sum_{t'=tB}^{(t+1)B-1} W(t') \text{ is irreducible},\cr
0 &\text{otherwise.}
\end{cases}
\end{align*}
By the preceding argument, we have $\Pr(X_B(t)=1)\geq p= \eps^{2(n-1)}>0$. Note that the $X_B(t)$ are independent. We let $H_B(T) = \sum_{t=0}^{T}X_B(t)$ for all $T\geq 0$, and define
\begin{align*}
q_t \triangleq \max\{q:k_q\leq t\}.
\end{align*}
By definition of $H_B(\cdot)$ and $q_t$, we have that
\begin{align}\label{eqn:qgeqH}
q_t\geq H_B\left(\left\lfloor\frac{t}{B}\right\rfloor-1\right).
\end{align}
Now, we have that 
\begin{align*}
\Ex \left[\Lambda_{t,0}\right] 
=& \Ex\left[ \Lambda_{t,0}\ \biggl| \ q_t \leq \frac{pt}{2B}  \right] \Pr\left(q_t \leq \frac{pt}{2B}  \right) \\
&+ \Ex\left[\Lambda_{t,0}\ \biggl|\ q_t > \frac{pt}{2B} \right] \Pr\left(q_t > \frac{pt}{2B}\right).
\end{align*}
Since all terms on the right-hand side are less than or equal to $1$, we have
\begin{align*}
\Ex\left[\Lambda_{t,0}\right] \leq \Pr\left(q_t \leq \frac{pt}{2B} \right) + \Ex\left[\Lambda_{t,0}\ \biggl|\ q_t > \frac{pt}{2B} \right].
\end{align*}
Using~\eqref{eqn:qgeqH}, we have
\begin{align*}
\Ex\left[\Lambda_{t,0}\right] \leq& \Pr\left(H_B\left(\left\lfloor\frac{t}{B}\right\rfloor-1\right) \leq \frac{pt}{2B} \right) \\
&+ \Ex\left[\Lambda_{t,0}\ \biggl| \ q_t > \frac{pt}{2B} \right].
\end{align*}
Let us consider the second term on the right-hand side. When $ q_t > \frac{pt}{2B} $, we have $\left| \Q_{t,0}\right|\geq \left\lfloor\frac{pt}{2nB}\right\rfloor$. Using Lemma~\ref{lemma:opt1} to maximize the second term on the right-hand side over the choices of $\ell_q$, we obtain
\begin{align}\label{eqn:partB}
\Ex\left[\Lambda_{t,0}\ \biggl|\ q_t > \frac{pt}{2B} \right]	\leq& \left(1-\frac{1}{n^{\frac{t}{\left\lfloor \frac{pt}{2nB}\right\rfloor}}}\right)^{\left\lfloor \frac{pt}{2nB}\right\rfloor}\cr \leq& 2\left(1-\frac{1}{n^{\frac{t}{\left\lfloor \frac{pt}{2nB}\right\rfloor}}}\right)^{ \frac{pt}{2nB}}.
\end{align}

To further simplify the above inequality, we show that $\frac{t}{\left\lfloor \frac{pt}{2nB}\right\rfloor}\leq \frac{4nB}{p}$. To show this, we note that for all $t\geq\frac{2nB}{p}+B$, we have $\frac{pt}{2nB}>1$ and hence, ${\left\lfloor \frac{pt}{2nB}\right\rfloor}\geq 1$. Now, assume that $\xi={\left\lfloor \frac{pt}{2nB}\right\rfloor}\geq 1$. We have $2nB\xi\leq pt\leq 2nB(\xi+1)$. Therefore, 
\begin{align*}
	\frac{t}{\left\lfloor \frac{pt}{2nB}\right\rfloor}\leq \frac{2nB}{p}\left(\frac{\xi+1}{\xi}\right)\leq \frac{4nB}{p},
\end{align*}
where the last inequality follows from the fact that $\xi\geq 1$. 

Using this inequality in \eqref{eqn:partB}, we get
\begin{align}\label{eqn:rhsfirst}
\Ex\left[\Lambda_{t,0}\ \biggl|\ q_t > \frac{pt}{2B} \right]	\leq&2 \left(1-\frac{1}{n^{\frac{t}{\left\lfloor \frac{pt}{2nB}\right\rfloor}}}\right)^{ \frac{pt}{2nB}}\cr 
\leq& 2\left(1-\frac{1}{n^{\frac{4nB}{p}}}\right)^{ \frac{pt}{2nB}}.
\end{align}
On the other hand, since $\Ex[X_B(t)]\geq p$ for all $t\geq B$, we have
\begin{align*}
&\Pr\left(H\left(\left\lfloor\frac{t}{B}\right\rfloor-1\right) \leq \frac{pt}{2B} \right) \cr
&= \Pr\left(\sum_{t'=0}^{\lfloor{t/B}\rfloor-1}X_B(t') -  p \left(\left\lfloor\frac{t}{B}\right\rfloor-1\right) \leq - \alpha_t \left(\left\lfloor\frac{t}{B}\right\rfloor-1\right)\right)\cr
&\leq \Pr\left(\sum_{t'=0}^{\lfloor{t/B}\rfloor-1}\left(X_B(t') -  \Ex[X_B(t')] \right)\leq -\alpha_t \left(\left\lfloor\frac{t}{B}\right\rfloor-1\right)\right),
\end{align*}
where
\begin{align}\label{eqn:alpha}
\alpha_t = \frac{p\left(\left\lfloor\frac{t}{B}\right\rfloor-1\right)-\frac{pt}{2B}}{\left\lfloor\frac{t}{B}\right\rfloor-1}.
\end{align}
When $t\geq B+\frac{2nB}{p}$, $\alpha_t>0$ and hence, by Lemma~\ref{lemma:Hoeffding}, we obtain
\begin{align}\label{eqn:rhssecond}
\Pr\left(H\left(\left\lfloor\frac{t}{B}\right\rfloor-1\right) \leq \frac{pt}{2B} \right) \leq& \exp\left(-\alpha_t^2\left(\left\lfloor\frac{t}{B}\right\rfloor-1\right)\right)\cr 
\leq& \exp\left(-\alpha_t^2\left(\frac{t}{B}-2\right)\right).
\end{align}
From~\eqref{eqn:alpha}, we have 
\begin{align*}
\alpha_t > \frac{p\left(\frac{t}{B}-2\right)-\frac{pt}{2B}}{\frac{t}{B}}=\frac{p}{2}-\frac{2pB}{t}.
\end{align*}
If we let $\beta_t =\frac{p}{2}-\frac{2pB}{t}$, using~\eqref{eqn:rhsfirst} and \eqref{eqn:rhssecond}, we conclude that
\begin{align*}
\Ex\left[\Lambda_{t,0}\right] \leq \exp\left(-\beta_t^2\left(\frac{t}{B}-2\right) \right) + 2\left(1-\frac{1}{n^{\frac{4nB}{p}}}\right)^{ \frac{pt}{2nB}},
\end{align*}
finishing the proof.
\end{proof}

\begin{lemma}\label{Lemma:El1}
In the push-sum algorithm~\eqref{eqn:mtxalgorithm} let $\{W(t)\}$ be a sequence of random column-stochastic matrices corresponding to the sequence $\{P(t)\}$ satisfying Assumption~\ref{assum:Pt}. We have, for all $i\in [n]$ and $t\geq 0$,
\begin{align*}
\Ex\left[\ln\left(\frac{1}{y_{i}(t)}\right)\right]\leq \ln(n)\left(B\frac{n}{p}+B\right).
\end{align*}
\end{lemma}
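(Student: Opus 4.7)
The plan is to prove the uniform bound by a ``reset-and-decay'' argument matched to the target decomposition $\ln(n)(Bn/p+B)$: the additive $B\ln n$ will play the role of a fixed ``reset level'' for $y_i$, and $(Bn/p)\ln n$ will absorb the expected delay between resets, with the per-block reset probability conjectured to be at least $p/n$.

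The first ingredient is a deterministic decay bound. Since $P_{ii}(t)=1$ under Assumption~\ref{assum:Pt}, $R_{ii}(t)=1$ almost surely, and~\eqref{eqn:wt} yields $W_{ii}(t)\ge 1/n$ (the denominator in~\eqref{eqn:wt} is at most $n$). Thus $y_i(t+1)\ge W_{ii}(t)y_i(t)\ge y_i(t)/n$, and iterating gives, for any $\tau\le t$, $\ln(1/y_i(t))\le \ln(1/y_i(\tau))+(t-\tau)\ln n$. I would then define $\tau_i(t)$ as the largest $s\le t$ for which $y_i(s)\ge 1/n^B$ (well-defined since $y_i(0)=1$), which gives $\ln(1/y_i(t))\le (B+(t-\tau_i(t)))\ln n$. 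It therefore suffices to show $\Ex[t-\tau_i(t)]\le Bn/p$.

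The second, main ingredient is a per-block reset lower bound: in each block $s$ of $B$ steps, with conditional probability (given the past) at least $p/n$, the value $y_i((s+1)B)$ reaches $1/n^B$. Conditional on $\Ev_s$, which has probability at least $p$ by the argument in the proof of Lemma~\ref{lm:PtDIFP}, the union graph over block $s$ is strongly connected. Since $\|y(sB)\|_1=n$ forces the existence of some $j^*$ with $y_{j^*}(sB)\ge 1$, the key observation is that with conditional probability at least $1/n$ the product $W((s+1)B-1:sB)$ admits a realizable in-order path from $j^*$ to $i$; padding the single edge of such a path with self-loops (each contributing at least $1/n$) yields $[W((s+1)B-1:sB)]_{i,j^*}\ge 1/n^B$, and hence $y_i((s+1)B)\ge 1/n^B$. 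Combining this with the $\ge p$ bound on $\Pr(\Ev_s)$ yields the per-block reset probability $\ge p/n$, so $t-\tau_i(t)$ is stochastically dominated by $B$ times a geometric random variable of parameter $p/n$, and $\Ex[t-\tau_i(t)]\le Bn/p$ follows.

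The main obstacle will be making the $1/n$ factor rigorous. Since the identity of the ``max-$y$'' node $j^*(sB)$ depends on the entire past $W(0),\dots,W(sB-1)$, one cannot invoke symmetry blindly; a clean approach is to argue uniformly over any admissible choice of source node by exploiting column-stochasticity of $W((s+1)B-1:sB)$, which forces at least one row of each column to carry a substantial portion of its mass, together with the strong connectivity guaranteed by $\Ev_s$. This combined structural-probabilistic step, which converts the global irreducibility given by $\Ev_s$ into a node-specific lower bound for $y_i$ with the correct $1/n$ factor, is where the detailed calculation concentrates.
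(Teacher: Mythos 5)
Your deterministic ingredients are fine: $W_{ii}(t)\ge 1/n$ gives the decay bound, $\ones_n'y(t)=n$ gives a node $j^*$ with $y_{j^*}(sB)\ge 1$, and reducing the target to $\Ex[t-\tau_i(t)]\le Bn/p$ is a legitimate reorganization of the bound. The gap is the per-block reset claim, and it is not merely a missing calculation --- the claim is false in general. Conditioning on $\Ev_s$ only tells you that the \emph{union} of the edge sets over the block is strongly connected; it says nothing about the temporal order in which the edges of a $j^*\to i$ path can occur, and that order is dictated by the deterministic support pattern of $P(\cdot)$ inside the block, not by any residual randomness you could price at $1/n$. Concretely, with $n=3$ and $B=2$, let the only possible off-diagonal edges inside each block be $v_1\to v_2$ and $v_3\to v_1$ at the first step and $v_2\to v_3$ at the second step (all with probability $\eps$, plus the mandatory self-loops). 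This satisfies Assumption~\ref{assum:Pt}, and on $\Ev_s$ the union graph is the strongly connected $3$-cycle; yet $[W((s+1)B-1:sB)]_{12}=0$ with probability one, since the only path $v_2\to v_3\to v_1$ can only be traversed backwards in time within the block. If the history has concentrated essentially all the mass at $v_2$, so that $j^*=2$ and $y_1(sB),y_3(sB)$ are already far below $1/n^{B}$, then $y_1((s+1)B)<1/n^B$ with conditional probability one: there is no uniform per-block reset probability $\ge p/n$, and the geometric domination of $t-\tau_i(t)$ collapses. Your proposed rescue via column-stochasticity does not repair this: it guarantees that each column of the block product has \emph{some} entry $\ge 1/n$, but not that this entry lies in the specific row $i$ you are trying to bound.

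This is precisely why the paper waits for $n$ good blocks rather than one. By Lemma~\ref{lemma:compgraph}, the product of $n-1$ (a fortiori $n$) irreducible matrices with positive diagonals is entrywise positive, so once $n$ of the most recent $\tau_t$ blocks satisfy $\Ev_\cdot$, Lemma~\ref{lemma:Asu} gives $W_{ij}(t:(a-\tau_t)B)\ge 1/n^{\tau_tB+B}$ for \emph{all} $i,j$, and column-stochasticity of the earlier factors propagates this lower bound to $W_{ij}(t:0)$ and hence to $y_i(t)$. The relevant waiting time is then a negative binomial with parameters $n$ and $p$ (mean $n/p$), not a geometric with parameter $p/n$; the two happen to have the same mean, which is why your decomposition of the constant looked consistent, but only the multi-block version is actually provable. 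Replacing your single-block reset event by ``the last $\tau$ blocks contain $n$ occurrences of $\Ev_\cdot$'' essentially recovers the paper's argument.
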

\begin{proof}
By Lemma~\ref{lemma:Asu}, for all $t< \frac{Bn}{p}+B$ and $i\in [n]$,
\begin{align*}
[W(t:0)]_{ii}\geq \frac{1}{n^{B\frac{n}{p}+B}} ,
\end{align*}
almost surely. This implies that
\begin{align*}
\Ex\left[\ln\left(\frac{1}{y_i(t)}\right)\right]\leq \ln(n)\left(B\frac{n}{p}+B\right),
\end{align*}
for all $t< \frac{Bn}{p}+B$ and $i\in [n]$.
If $t\geq \frac{Bn}{p}+B$, let $t = aB+b$, where $a, b\in \integers_{\geq 0}$ and $b<B$. Define
\begin{equation*}
\tau_t = \begin{cases}
\min\{T:\sum_{t=a-T}^{a-1} X_B(t)=n\}, & \text{if }\sum_{t=0}^{a-1}X_B(t)\geq n\\
a& \text{otherwise}.
\end{cases}
\end{equation*}
When $\tau_t=a$, ${W_{ij}(t:0)\geq \frac{1}{n^{\tau_t B+B}}}$, for all $i,j\in [n]$. When $\tau_t\neq a$, by Lemma~\ref{lemma:compgraph}, ${W(aB-1:(a-\tau_t) B)}$ is a positive matrix and consequently by Lemma~\ref{lemma:Asu}, ${W_{ij}(t:(a-\tau_t) B)\geq \frac{1}{n^{\tau_t B+B}}}$ for all ${i,j\in [n]}$; in addition, since the $W(t)$ are column-stochastic, we have ${W_{ij}(t:0)\geq \frac{1}{n^{\tau_t B+B}}}$. Therefore, for all $t\geq 0$ we have
\begin{align*}
\ln\left(\frac{1}{W_{ij}(t:0)}\right)\leq \ln(n)(\tau_t B+B)\quad\text{for all } i,j\in[n].
\end{align*}
Consider a sequence of independent Bernoulli trials $Y_t$, where in each trial the probability of success is $p$. The number of trials until $n$ successes occur is a negative binomial random variable $Z$ having parameters $n$ and $p$. Since ${\Pr(\tau_t \leq i)\geq\Pr(Z\leq i)}$ for all $i\geq n $, we have $\Ex[\tau_t]\leq \Ex[Z]$. Since ${\Ex[Z] = \frac{n}{p}}$, we obtain
$		\Ex[\tau_t]\leq\frac{n}{p} $,
and hence the result follows. 
\end{proof}	

We are now in a position to prove Theorem~\ref{theorem:main-rate}.

\begin{proof}[Proof of Theorem~\ref{theorem:main-rate}]
In \eqref{eqn:T1}, since both sides are positive, we have
\begin{align*}
\ln\left(\left|z_i(t+1) - \bar{x}\right|\right)\leq& \ln\left(\frac{2\|x(0)\|_1}{{y_{i}(t+1)}} \Lambda_{t,0}\right)\cr
=& \ln\left( 2\|x(0)\|_1 \right) + \ln\left(\frac{1}{{y_{i}(t+1)}} \right) \cr 
&+ \ln\left(\Lambda_{t,0}\right).
\end{align*}
By taking expectations and using Lemma~\ref{Lemma:El1}, we obtain
\begin{align}\label{eqn:Eineq}
\Ex\left[ \ln\left(\left|z_i(t+1) - \bar{x}\right|\right)\right]\leq& \ln\left( 2\|x(0)\|_1 \right) + \ln (n)\left(\frac{nB}{p}+B\right)\cr 
&+ \Ex \left[\ln\left(\Lambda_{t,0}\right)\right]\cr 
\leq& \ln\left( 2\|x(0)\|_1 \right) + \ln (n)\left(\frac{nB}{p}+B\right)\cr 
&+ \ln\left(\Ex \left[\Lambda_{t,0}\right]\right),
\end{align}
where the last inequality follows from Jensen's inequality. Now by Lemma~\ref{Lemma:product-rate}, we have
\begin{align*}
\Ex \left[\Lambda_{t,0}\right]\leq& \exp\left(-\beta_t^2\left(\frac{t}{B}-2\right)\right) +2\left(1-\frac{1}{n^{\frac{4nB}{p}}}\right)^{\frac{pt}{2nB}},
\end{align*}
where $\beta_t = \frac{p}{2}-\frac{2pB}{t}$. Let us consider the first term on the right hand side; since $\beta_t\leq \frac{1}{2}$ we have
\begin{align*}
\exp\left(-\beta_t^2\left(\frac{t}{B}-2\right)\right)\leq& \exp\left(-\beta_t^2\frac{t}{B}+\frac{1}{2}\right) \cr 
= & \exp\left(-\frac{p^2t}{4B}+2p^2+\frac{1}{2}-\frac{4p^2B}{t}\right) \cr 
\leq & \exp\left(-\frac{p^2t}{4B}+\frac{5}{2}\right) \cr 
\leq & 13\exp\left(-\frac{p^2t}{4B}\right) \cr 
=& 13\left(\exp\left(-\frac{pn}{2}\right)\right)^{\frac{pt}{2nB}}.
\end{align*}
Since $n\geq 2$, $\exp\left(-\frac{pn}{2}\right) \leq \exp\left(-p\right)$. On the other hand, ${\left(1-\frac{1}{n^{\frac{4nB}{p}}}\right) \geq \left(1-\frac{1}{2^{\frac{8}{p}}}\right) }$ for all $n\geq 2 $ and $B\geq 1$. It can be seen the for $p\in [0,1]$,  $\exp\left(-p\right) \leq \left(1-\frac{1}{2^{\frac{8}{p}}}\right)  $, and consequently $ {\exp\left(-\frac{pn}{2}\right) \leq \left(1-\frac{1}{n^{\frac{4nB}{p}}}\right)}$. Hence
\begin{align}\label{eqn:Elambda}
\Ex \left[\Lambda_{t,0}\right]\leq 15 \left(1-\frac{1}{n^{\frac{4nB}{p}}}\right)^{\frac{pt}{2nB}}.
\end{align}
The result now follows using \eqref{eqn:Eineq} and \eqref{eqn:Elambda}. 
\end{proof}

\section{Conclusion}\label{sec:conclusion}
We have studied the convergence properties of the push-sum algorithm for average consensus on sequences of random directed graphs. We have proved that this dynamics is convergent almost surely when some mild connectivity assumptions are met and the auxiliary states of agents are uniformly bounded away from zero infinitely often. We have shown that the latter assumption holds for sequences of random matrices constructed using a sequence of time-varying $ B $-irreducible probability matrices. We have also obtained convergence rates for the proposed push-sum algorithm. Future work include studying the implications in scenarios with link-failure and in distributed optimization on random time-varying graphs. 


\section{Appendix}
\newcounter{mycounter}
\renewcommand{\themycounter}{A.\arabic{mycounter}}
\newtheorem{propositionappendix}[mycounter]{Proposition}
\newtheorem{lemmaappendix}[mycounter]{Lemma}
\newtheorem{remarkappendix}[mycounter]{Remark}

\begin{lemmaappendix}\label{lemma:compgraph}
	For $n\geq 2$, let $\{A(i)\}_{i=1}^{n-1}$ be a sequence of weighted adjacency matrices associated with the strongly connected directed graphs $\{\G(i)\}_{i=1}^{n-1}$ on the node set ${\V = \{v_1,v_2,\ldots,v_n\}}$, where ${\G(i) = (\V,\Es(i),A(i))}$ and $A(i)\in S_n^{+}$ for all $i\in [n-1]$. Then the matrix product $A(n-1:1)$ is positive. 
\end{lemmaappendix}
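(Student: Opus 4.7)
The plan is to argue combinatorially via the walk interpretation of matrix products. Since each $A(i)$ is column-stochastic with the convention that $A_{ji}(i)>0$ iff there is an edge from $v_i$ to $v_j$ in $\G(i)$, the product entry $[A(n-1:1)]_{ij}$ is positive if and only if there exists a time-indexed walk $j = v_{k_0}, v_{k_1}, \ldots, v_{k_{n-1}} = i$ such that $(v_{k_{m-1}}, v_{k_m}) \in \Es(m)$ for each $m \in [n-1]$. So the lemma reduces to showing that for every pair $i,j \in [n]$ such a walk exists in the time-varying graph sequence $\G(1), \ldots, \G(n-1)$.

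To establish this, I would fix $j$ and define $R_k(j) \subseteq \V$ to be the set of vertices that can be reached from $v_j$ in exactly $k$ steps while respecting the time index, i.e., using an edge of $\G(m)$ at step $m$ for $m = 1, \ldots, k$. The proof would then show by induction that $|R_k(j)| \geq k+1$ as long as $R_{k-1}(j) \neq \V$, which upon reaching $k = n-1$ forces $R_{n-1}(j) = \V$ and hence $i \in R_{n-1}(j)$.

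The inductive step has two ingredients. First, since each $A(i) \in \S_n^+$ has positive diagonal, every $\G(i)$ contains a self-loop at every node, so vertices already in $R_{k-1}(j)$ remain reachable at time $k$, giving $R_{k-1}(j) \subseteq R_k(j)$. Second, if $R_{k-1}(j)$ is a non-trivial subset of $\V$, the strong connectivity of $\G(k)$ implies that there is at least one edge in $\Es(k)$ from $R_{k-1}(j)$ to its complement $\overline{R_{k-1}(j)}$; following this edge produces a new reachable vertex, so $R_k(j) \supsetneq R_{k-1}(j)$ and $|R_k(j)| \geq |R_{k-1}(j)| + 1$. The base case $|R_0(j)| = 1$ then yields $|R_{n-1}(j)| \geq n$, completing the induction.

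The only non-trivial step is the standard observation that in a strongly connected directed graph, any non-trivial subset of vertices has at least one outgoing edge to its complement; this follows immediately from the existence of a path from any vertex in the subset to any vertex outside it. Once this is noted, the rest of the argument is a clean set-size induction, and the bound $n-1$ on the product length is tight because $|R_0(j)| = 1$ must grow to $n$ by incrementing at least one unit per step.
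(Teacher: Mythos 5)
Your proof is correct and is essentially the paper's own argument in different notation: your reachable set $R_k(j)$ is exactly the out-neighborhood $\Nout_j(k:1)$ of the partial product's graph, and both proofs grow this set by one vertex per step using positive diagonals (monotonicity) plus strong connectivity of $\G(k)$ (an outgoing edge from any non-trivial subset to its complement). No substantive difference.
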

\begin{proof}
	Let $\G(k:1)= (\V,\Es(k:1))$ indicate the directed graph associated with the product $A(k:1)$, where $k\in [n-1]$. Let $\Nout_i(k:1)$ and $\dout_i(k:1)$ indicate the set of out-neighbors and out-degree of node $i\in [n]$ in directed graph $\G(k:1)$, respectively. Consider an arbitrary but fixed node $i\in [n]$. Since $A(1)\in \S^{+}_n$ and $\G(1)$ is strongly connected, we have
	\begin{align}\label{eqn:d1}
	\dout_i(1)\geq 2.
	\end{align}
	Now consider the directed graph $\G(k:1)$ and assume that ${\dout_i(k:1)\leq n-1}$ for some $k\in[n-1]$; we show that ${\dout_i(k+1:1)>\dout_i(k:1)}$. By Lemma~\ref{lemma:Asu}(ii), we have ${\Nout_i(k:1)\subseteq \Nout_i(k+1:1)}$. Moreover, since $\G(k+1)$ is strongly connected and $\dout_i(k:1)\leq n-1$, there is ${l\notin \Nout_i(k:1)}$ such that $l\in \Nout_j(k+1)$ for some $j\in \Nout_i(k:1)$; otherwise, there is no path between $i$ and $l$ in $\G(k+1)$, contradicting the strong connectivity of $\G(k+1)$. Hence, by Lemma~\ref{lemma:Asu} (iii) $l\in \Nout_i(k+1:1)$, implying that \begin{align*}\label{eqn:dk+1}
	\dout_i(k+1:1)>\dout_i(k:1).
	\end{align*} 
	This along with \eqref{eqn:d1} imply that
	\begin{align*}
	\dout_i(k:1)\geq k+1,,
	\end{align*} 
	for all $k\in [n-1]$, which implies that $\dout_i(n-1:1) = n$. Since this statement holds for any $i\in [n]$, the matrix product $A(n-1:1)$ is positive.
\end{proof}

\begin{lemmaappendix}[Lemma 1 \cite{AN-AO:09}]\label{lemma:Asu}
Consider a sequence of directed graphs $\{\G(t)\}$, which we assume to contain all the self-loops, with a corresponding sequence of weighted adjacency matrices $\{A(t)\}$. In addition, assume that $A_{ij}(t)\geq \gamma$ whenever $A_{ij}(t)>0$, for some $\gamma>0$. 
Then the following statements hold:
\begin{enumerate}[(i)]
\item $[A(t:s)]_{ii}\geq \gamma^{t-s+1}$, for all $i\in[n]$ and $t\geq s\geq 0$;
\item if $[A(r)]_{ij}>0$ for some $t\geq r\geq s\geq 0$ and $i,j\in[n]$, then ${[A(t:s)]_{ij}\geq \gamma^{t-s+1}}$;
\item if $[A(s)]_{ik}>0$ and $[A(r)]_{kj}>0$ for some ${t\geq r>s\geq 0}$, then $[A(t:s)]_{ij}\geq \gamma^{t-s+1}$.
\end{enumerate}
\end{lemmaappendix}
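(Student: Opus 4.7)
The proof of Lemma~\ref{lemma:Asu} rests on the path-interpretation of a product of nonnegative matrices. Expanding
\[
[A(t:s)]_{ij} = \sum_{k_1,\ldots,k_{t-s}} A_{i,k_1}(t)\,A_{k_1,k_2}(t-1)\cdots A_{k_{t-s},j}(s),
\]
any specific choice of intermediate indices $(k_1,\ldots,k_{t-s})$ yields a lower bound on $[A(t:s)]_{ij}$ equal to the product of the $t-s+1$ entries along that ``path''. Because every positive entry of each $A(\tau)$ is at least $\gamma$ and each $\G(\tau)$ is assumed to contain all the self-loops, we have $A_{ii}(\tau)\geq\gamma$ for every $i$ and $\tau$. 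Consequently, each of the three parts reduces to exhibiting a single path, all of whose $t-s+1$ factors are at least $\gamma$, and the lower bound $\gamma^{t-s+1}$ then follows at once.

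For part (i), I would pick the constant path $k_1=\cdots=k_{t-s}=i$, so every factor is the self-loop entry $A_{ii}(\tau)\geq\gamma$. For part (ii), given the positive entry $A_{ij}(r)$ at some time $r\in[s,t]$, the path sits at $j$ via self-loops for times $s,\ldots,r-1$, inserts the edge at time $r$, and then sits at $i$ via self-loops for times $r+1,\ldots,t$; every one of the $t-s+1$ factors is at least $\gamma$. For part (iii), the two hypothesized off-diagonal entries share the intermediate index $k$ and together realize a two-hop connection between $j$ and $i$, so the path is built by sitting at the starting endpoint with self-loops until the earlier transition time, using the available edge to move to $k$, sitting at $k$ with self-loops until the later transition time, using the second edge to move to the remaining endpoint, and then sitting there with self-loops until time $t$. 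In each case the product telescopes into $t-s+1$ factors bounded below by $\gamma$.

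The only subtlety I anticipate is the index bookkeeping: since $A(t:s)=A(t)A(t-1)\cdots A(s)$ orders the factors in decreasing time, one must line up the row/column of each factor with the \emph{forward} chronological ordering of the hypothesized transitions and with the correct endpoints $i,j$. There is no analytical obstacle beyond this; at its core the argument is simply that the presence of self-loops allows a finite collection of useful transitions to be padded into a path of arbitrary prescribed length.
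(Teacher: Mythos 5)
The paper offers no proof of this lemma --- it is imported verbatim as ``Lemma 1'' of \cite{AN-AO:09} --- so there is nothing internal to compare against. Your path/self-loop-padding argument is the standard one, and your treatments of parts (i) and (ii) are correct and complete: the constant path for (i), and the pad--jump--pad path for (ii), each contribute exactly $t-s+1$ factors, every one of which is a positive entry of some $A(\tau)$ and hence at least $\gamma$.

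The ``index bookkeeping'' you defer in part (iii), however, is not a formality; it is precisely where the statement as written breaks. With the convention $A(t:s)=A(t)\cdots A(s)$, the time-$\tau$ factor of any path term carries the chronologically \emph{later} node in its row index, so the hypothesized entries $[A(s)]_{ik}>0$ and $[A(r)]_{kj}>0$ with $r>s$ provide a transition $k\to i$ at the \emph{earlier} time $s$ and a transition $j\to k$ at the \emph{later} time $r$. These cannot be concatenated into a forward-in-time path from $j$ to $i$, and the literal claim is in fact false: take $n=3$, $s=0$, $r=t=1$, with $A(0)$ having positive diagonal plus $A_{12}(0)>0$ and all other entries zero, and $A(1)$ having positive diagonal plus $A_{23}(1)>0$ and all other entries zero; then the hypothesis of (iii) holds with $i=1$, $k=2$, $j=3$, yet $[A(1)A(0)]_{13}=\sum_{\ell}A_{1\ell}(1)A_{\ell 3}(0)=A_{11}(1)A_{13}(0)=0$. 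The path you describe --- sit at the starting endpoint, move to $k$ at the earlier time, sit at $k$, move to the other endpoint at the later time --- actually consumes the entries $[A(s)]_{kj}$ and $[A(r)]_{ik}$, i.e.\ it proves the statement with the subscripts transposed. That transposed version is the one the paper actually invokes in its Lemma~A.1 (there $[A(k{:}1)]_{ji}>0$ plays the role of the early factor and $[A(k{+}1)]_{lj}>0$ the late one), so it is evidently the intended statement; but as submitted, your part (iii) proves the corrected claim rather than the one written, and you should say so explicitly rather than leaving the mismatch as an unexamined ``subtlety.''
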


\begin{lemmaappendix}[Lemma 4.3 \cite{FB-VB-PT-JT-MV:10}]\label{lemma:WG}
Consider the push-sum algorithm \eqref{eqn:mtxalgorithm}. Define 
\begin{align*}
f(t) = \max_{i\in [n]}\frac{\sum_{j=1}^{n}|[W(t:0)]_{ij}-\frac{1}{n}\sum_{k=1}^{n}[W(t:0)]_{ik}|}{y_i(t)}.
\end{align*}
Then, $f(t)$ is non-increasing and 
\begin{align*}
\|z(t)-\bar{x}\ones_n\|_{\infty}\leq \|x(0)\|_{\infty}f(t).
\end{align*}
\end{lemmaappendix}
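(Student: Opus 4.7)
The plan is to recast $f(t)$ in terms of $\ell_1$-distances between the \emph{normalized rows} of the product $P(t) := W(t:0)$ and the uniform distribution, and then dispatch the two claims by exploiting the convex-combination structure that left-multiplication by $W(t+1)$ induces on these rows. Since the $W(\tau)$ have positive diagonals (self-loops) and $y(0) = \ones_n$, we get $y_i(t) = \sum_j P_{ij}(t) > 0$ inductively, so the row-normalizations $\pi_i(t) := P_{i\cdot}(t)/y_i(t)$ are stochastic vectors. With this, $f_i(t) := \sum_j |P_{ij}(t)/y_i(t) - 1/n| = \|\pi_i(t) - \tfrac{1}{n}\ones_n\|_1$ and $f(t) = \max_i f_i(t)$.

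The error bound is then immediate. Since $x(t) = P(t)\,x(0)$, we can write $z_i(t) - \bar{x} = \sum_j (\pi_{ij}(t) - 1/n)\,x_j(0)$, and applying the elementary inequality $|\sum_j a_j b_j| \leq \|a\|_1 \|b\|_\infty$ yields $|z_i(t) - \bar{x}| \leq \|x(0)\|_\infty f_i(t) \leq \|x(0)\|_\infty f(t)$; taking the maximum over $i$ gives the stated bound.

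The monotonicity $f(t+1) \leq f(t)$ is the crux. Using $P(t+1) = W(t+1)\,P(t)$, one has $P_{ij}(t+1) = \sum_k W_{ik}(t+1)\,P_{kj}(t)$ and correspondingly $y_i(t+1) = \sum_k W_{ik}(t+1)\,y_k(t)$. Defining $\alpha_k^{(i)} := W_{ik}(t+1)\,y_k(t)/y_i(t+1)$, a direct division shows $\pi_i(t+1) = \sum_k \alpha_k^{(i)}\,\pi_k(t)$, and the $\{\alpha_k^{(i)}\}_k$ form a probability distribution in $k$ (nonnegative, and summing to one by the very definition of $y_i(t+1)$). Since $\tfrac{1}{n}\ones_n$ is fixed under convex combinations, subtracting it from both sides and applying the triangle inequality in $\ell_1$ yields $f_i(t+1) \leq \sum_k \alpha_k^{(i)}\,f_k(t) \leq f(t)$; taking maxima over $i$ concludes $f(t+1) \leq f(t)$.

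There is no substantial obstacle; the only delicate points are (a) ensuring $y_i(t) > 0$ so that the normalization is well-defined, which is handled by the self-loop assumption on each $W(\tau)$, and (b) verifying that the $\alpha_k^{(i)}$ sum to one in $k$, which is immediate from the $y$-update rule. It is worth noting that the \emph{column}-stochasticity of the $W(\tau)$ is not invoked anywhere in the proof of this lemma — the argument rests only on the push-sum $y$-dynamics together with nonnegativity and self-loops; column-stochasticity enters elsewhere (e.g., in ensuring that $\bar{x}$ is preserved by the average).
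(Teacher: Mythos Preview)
The paper does not supply its own proof of this lemma; it is quoted verbatim from~\cite{FB-VB-PT-JT-MV:10} (Lemma~4.3 there) and used as a black box. Your argument is correct and is in fact the standard one: after identifying $f_i(t)=\|\pi_i(t)-\tfrac{1}{n}\ones_n\|_1$ for the row-normalized stochastic vectors $\pi_i(t)$, the error bound is a direct H\"older estimate, and monotonicity follows because each $\pi_i(t+1)$ is a convex combination of the $\pi_k(t)$ with weights $\alpha_k^{(i)}=W_{ik}(t+1)\,y_k(t)/y_i(t+1)$, so the $\ell_1$-distance to the fixed point $\tfrac{1}{n}\ones_n$ cannot increase. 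Your closing observation that column-stochasticity plays no role in this particular lemma is also correct.
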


\begin{lemmaappendix}[Hoeffding's inequality~\cite{WH:63}]\label{lemma:Hoeffding}
If $X_1,X_2,\ldots,X_n $ are independent random variables and $0\leq X_i\leq 1$, for all $i\in [n]$, then for any $\alpha>0$, we have
\begin{align*}
\Pr\left(\sum_{i=1}^{n}\left(X_i - \Ex[X_i]\right)\leq -\alpha n\right)\leq \exp\left(-2\alpha^2n\right).
\end{align*}
\end{lemmaappendix}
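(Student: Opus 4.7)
The plan is to use the standard Chernoff-bound approach combined with Hoeffding's lemma for bounded centered random variables. Let $Y_i = X_i - \Ex[X_i]$, so $\Ex[Y_i]=0$ and $Y_i \in [-\Ex[X_i],\, 1-\Ex[X_i]]$, an interval of length $1$. Let $S_n = \sum_{i=1}^n Y_i$. For any $s>0$, Markov's inequality applied to the nonnegative random variable $e^{-sS_n}$ gives
\begin{align*}
\Pr(S_n \leq -\alpha n) = \Pr\bigl(e^{-sS_n} \geq e^{s\alpha n}\bigr) \leq e^{-s\alpha n}\, \Ex[e^{-sS_n}].
\end{align*}
By independence of the $X_i$, hence of the $Y_i$, the moment generating function factors: $\Ex[e^{-sS_n}] = \prod_{i=1}^n \Ex[e^{-sY_i}]$.

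The next step, which is the technical heart of the argument, is to bound each factor via Hoeffding's lemma: if $Y$ is a zero-mean random variable with $Y \in [a,b]$, then $\Ex[e^{sY}] \leq \exp\bigl(s^2(b-a)^2/8\bigr)$ for all $s\in\real$. I would prove this by invoking convexity of $y \mapsto e^{sy}$ on $[a,b]$ to write $e^{sY} \leq \tfrac{b-Y}{b-a}e^{sa} + \tfrac{Y-a}{b-a}e^{sb}$, taking expectations, reparameterizing via $p=-a/(b-a)$ and $h=s(b-a)$, and showing by explicit differentiation that the resulting function $\phi(h) = -hp + \ln\bigl(1-p+pe^h\bigr)$ satisfies $\phi(0)=\phi'(0)=0$ and $\phi''(h)\leq 1/4$, so a second-order Taylor expansion yields $\phi(h)\leq h^2/8$. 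Applying this with $(a,b)$ of length $1$ for each $Y_i$ and with $-s$ in place of $s$ gives $\Ex[e^{-sY_i}] \leq e^{s^2/8}$, hence $\Ex[e^{-sS_n}] \leq e^{ns^2/8}$.

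Combining these estimates yields $\Pr(S_n \leq -\alpha n) \leq \exp\bigl(-s\alpha n + ns^2/8\bigr)$ for every $s>0$. The final step is to optimize the right-hand side over $s$; the exponent is a quadratic in $s$ minimized at $s^* = 4\alpha$, and substituting back gives the bound $\exp(-2\alpha^2 n)$ claimed in the statement. The main obstacle is the Hoeffding lemma itself, specifically the convexity-plus-Taylor computation showing $\phi''(h)\leq 1/4$; this is a one-variable calculus estimate (with $\phi''(h) = p(1-p)e^h/(1-p+pe^h)^2 \leq 1/4$ by AM-GM applied to the two terms in the denominator), but it is the only nontrivial ingredient, since everything else is essentially Markov's inequality, independence, and optimization.
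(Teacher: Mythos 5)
The paper gives no proof of this lemma: it is imported verbatim from Hoeffding's 1963 paper as a cited auxiliary result, so there is no in-paper argument to compare against. Your proof is correct and is the classical one from that reference (Markov's inequality applied to $e^{-sS_n}$, independence to factor the moment generating function, Hoeffding's lemma $\Ex[e^{sY}]\leq e^{s^2(b-a)^2/8}$ for a zero-mean variable on an interval of length one via the convexity-plus-Taylor estimate $\phi''(h)\leq 1/4$, and optimization at $s^*=4\alpha$ yielding the exponent $-2\alpha^2 n$); all steps check out.
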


\begin{lemmaappendix}\label{lemma:opt1}
For $n>1$ and for all $l_1, l_2, \ldots, l_q\in \integers_{\geq 0}$, ${q>0}$, we have
\begin{align*}
\prod_{i=1}^{q}\left(1-\frac{1}{n^{l_i}}\right)\leq\left(1-\frac{1}{n^{\frac{t}{q}}}\right)^{q},
\end{align*}
where $ t= l_1+l_2+\cdots +l_q$.
\end{lemmaappendix}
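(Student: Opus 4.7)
The plan is to prove this by an application of Jensen's inequality to the function $f(x)=\ln(1-n^{-x})$. Observe that once we take logarithms the claim becomes
\[
\sum_{i=1}^q f(l_i) \le q\, f\!\left(\frac{t}{q}\right),
\]
which is precisely the statement that $f$ satisfies the finite Jensen inequality on the arguments $l_1,\dots,l_q$. So the whole problem reduces to establishing the concavity of $f$.

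First I would dispose of the degenerate case. If any $l_i=0$, then the factor $1-n^{-l_i}=0$, so the left-hand side is $0$ and the inequality holds trivially (the right-hand side is $\ge 0$ since $n>1$ and $t/q\ge 1/q>0$ give $0\le 1-n^{-t/q}<1$). Hence we may assume $l_i\ge 1$ for all $i$, and in particular $f(l_i)$ is well defined and finite.

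Next I would verify concavity of $f$ on $(0,\infty)$ by direct computation. Writing $u=n^{-x}$, one has $u'=-u\ln n$, and
\[
f'(x)=\frac{u\,\ln n}{1-u},\qquad f''(x)=\ln n\cdot\frac{u'(1-u)+u\,u'}{(1-u)^2}=\frac{u'\,\ln n}{(1-u)^2}.
\]
Since $n>1$ gives $\ln n>0$ and $u>0$, we have $u'=-u\ln n<0$, and therefore $f''(x)<0$ on $(0,\infty)$. Thus $f$ is strictly concave on $(0,\infty)$.

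Finally, applying Jensen's inequality to the concave function $f$ at the points $l_1,\dots,l_q\in[1,\infty)$ with uniform weights $1/q$ gives
\[
\frac{1}{q}\sum_{i=1}^q f(l_i) \;\le\; f\!\left(\frac{1}{q}\sum_{i=1}^q l_i\right) \;=\; f\!\left(\frac{t}{q}\right),
\]
and exponentiating yields the claimed inequality. There is no real obstacle here — the only slightly delicate point is handling the case $l_i=0$, which is ruled out by the degeneracy argument, and noting that $t/q$ need not be an integer, which is harmless because $f$ is defined on the entire positive real axis.
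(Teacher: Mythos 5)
Your proof is correct and follows essentially the same route as the paper: take logarithms and apply Jensen's inequality to the concave function $\zeta\mapsto\ln\left(1-n^{-\zeta}\right)$. The only difference is that you verify the concavity explicitly and handle the $l_i=0$ case separately, both of which the paper leaves implicit.
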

\begin{proof}
It suffices to show that
\begin{align*}
\frac{1}{q}\sum_{i=1}^{q}\ln\left(1-\frac{1}{n^{l_q}}\right)\leq\ln\left(1-\frac{1}{n^{\frac{t}{q}}}\right),
\end{align*}
which simply follows from Jensen's inequality, since the function ${g(\zeta) = \ln\left(1-\frac{1}{n^{\zeta}}\right)}$ is concave.
\end{proof}

\bibliographystyle{ieeetr}%
\bibliography{alias,PR-add,BG-add,Main,Main-add,JC,BG}

\end{document}